\newtheorem{Theorem}{Theorem}[section]
\newtheorem{Lemma}[Theorem]{Lemma}
\newtheorem{Corollary}[Theorem]{Corollary}
\newtheorem{Proposition}[Theorem]{Proposition}
\theoremstyle{definition}
\newcommand\norm[1]{\left\lVert#1\right\rVert}
\begin{document}

\title{Diameter of weak neighborhoods and the Radon-Nikod\'ym property  in Orlicz-Lorentz spaces}
\keywords{diameter two property, Radon-Nikod\'ym property, Orlicz-Lorentz space}
\subjclass[2010]{46B20, 46E30, 47B38}

\author{Anna Kami\'nska}
\address{Department of Mathematical Sciences
The University of Memphis, TN 38152-3240}
\email{kaminska@memphis.edu}

\author{Hyung-Joon Tag}
\address{Department of Mathematical Sciences
The University of Memphis, TN 38152-3240}
\email{htag@memphis.edu}

\date{\today}

\begin{abstract}
Given an Orlicz $N$-function $\varphi$ and a positive decreasing weight $w$, we present criteria of the diameter two property and of the Radon-Nikod\'ym property in Orlicz-Lorentz function and sequence spaces $\Lambda_{\varphi,w}$ and $\lambda_{\varphi,w}$.  We show that in the spaces $\Lambda_{\varphi,w}$ or $\lambda_{\varphi,w}$ equipped with the Luxemburg norm, the diameter of any relatively weakly subset of the unit ball in these spaces is  two if and only if $\varphi$ does not satisfy the appropriate $\Delta_2$ condition, while they have the Radon-Nikod\'ym property if and only if $\varphi$ satisfies the appropriate $\Delta_2$ condition.

\end{abstract}

\maketitle

\section{Introduction}
 We characterize  the diameter two property in Orlicz-Lorentz function and sequence spaces equipped with the Luxemburg norm.  A Banach  space X has the diameter two property if every nonempty relatively weakly open subset of the unit ball $B_X$ has the diameter two. For general overview on this property, we refer to \cite{ALM}.  It is well known and easy to show that $C[0,1]$  and $L^1[0,1]$  have the diameter two property. It has been also shown  that every infinite-dimensional uniform algebra \cite{NW}, the set  $C(K,X)$ of all continuous functions from a Hausdorff compact topological space $K$ to a Banach space $X$ \cite{GP}, and a symmetric tensor product of $C(K)$ \cite{AB} have this property.   If a Banach space has the Radon-Nikod\'ym property its unit ball contains denting points, so it contains slices of arbitrarily small diameter \cite{B, DU}. Therefore the Radon-Nikod\'ym property can be considered as  nearly opposite from the diameter two property.  The predual of James tree space $B$ does not have both the Radon-Nikod\'ym property and the diameter two property, and the latter is due to the point of continuity property \cite{EW, 2LT}. 
We recall that a Banach space $X$ satisfies the Daugavet property if, for every rank one operator $T: X \rightarrow X$ and for the identity operator $I$, $\|I +T\| = 1 + \|T\|$ holds. The Daugavet property is stronger than the diameter two property, but they are not equivalent. There are natural examples showing this phenomenon. For instance  in \cite{AK} it was proved that the classical interpolation spaces such as $L^1 + L^\infty$ and $L^1 \cap L^\infty$ do not have the Daugavet property but they do have the diameter two property if equipped with the appropriate norms. Similarly, if an Orlicz function  $\varphi$  does not satisfy appropriate $\Delta_2$ condition then the Orlicz space $L_\varphi$ equipped with the Luxemburg norm fails to have the Daugavet property, but it has the diameter two property \cite{AKM}. The investigation of the diameter two property in Orlicz-Lorentz spaces is inspired by that result.

Recent discovery of an explicit description of the K\"othe dual  spaces of Orlicz-Lorentz spaces \cite{KLR} allows us to investigate a number of geometric properties of these spaces. We use this characterization here to consider the Radon-Nikod\'ym property and the diameter two property in Orlicz-Lorentz spaces $\Lambda_{\varphi,w}$  equipped with the Luxemburg norm. In fact, the K\"othe dual of an Orlicz-Lorentz space $\Lambda_{\varphi,w}$ is the space $\mathcal{M}_{\varphi_*,w}^0$  equipped with the Amemiya norm defined by a different modular $P_{\varphi_*,w}$, where $w$ is a locally integrable, positive, decreasing function and $\varphi_*$ is the Legendre-Fenchel conjugate to an Orlicz function $\varphi$.
For more details on the space $\mathcal{M}_{\varphi,w}$ and its application to Orlicz-Lorentz spaces, we refer to \cite{KR}. 

The article consists of two main parts. In section 2, we state and prove the necessary and sufficient conditions for the diameter two property in Orlicz-Lorentz function spaces $\Lambda_{\varphi,w}$, and in section 3, we show the analogous result for the sequence spaces $\lambda_{\varphi,w}$. In both sections we also characterize the Radon-Nikod\'ym property. In fact, we show that if $\varphi$ satisfies the appropriate $\Delta_2$ condition then the spaces are separable dual spaces, so they possess the Radon-Nikod\'ym property which in turn implies that they have slices of arbitrarily small diameter.
We also show that if $\varphi$ does not satisfy the appropriate $\Delta_2$ condition then the spaces have the diameter two property.

Let $L^0 = L^0(I)$ be a set of all $m$-measurable functions $x : I \rightarrow \mathbb{R}$, where $I = [0, \gamma), 0 < \gamma \leq \infty$, or $I = \mathbb{N}$ and where $m$ is the Lebesgue measure on $[0,\gamma)$ or a counting measure on $\mathbb{N}$. A Banach space $(X, \| \cdot \|)$ is called a Banach function lattice over $(I,m)$ if $X \subset L^0$ and if $0 \leq x \leq y$, where $x\in L^0$ and $y \in X$, then $x \in X$ and $\|x\| \leq \|y\|$.  If $I=[0,\gamma)$ then $X$ is called a Banach function space, and if $I=\mathbb{N}$ then  
the set $L^0(\mathbb{N})$ coincides with the space of all infinite real sequences $x = (x(k))_{k=1}^{\infty}$ and  in this case  $X$ is called a Banach sequence space.  

A Banach function lattice $(X, \| \cdot \|)$ is said to have the Fatou property if for any sequence $(x_n) \subset X$, $x \in L^0$, $x_n \uparrow x$, $m$-a.e., and $\sup_n \|x_n\| < \infty$, we have  $x \in X$, and $\|x_n\| \uparrow \|x\|$. 
Let $X_a\subset X$ be a closed subspace consisting of all order continuous elements from $X$. Recall that $x\in X$ is order continuous whenever for any $0\le x_n \le |x|$ with $x_n\downarrow 0$ $m$-a.e. we have that $\|x_n\|\downarrow 0$.  By $X_b$ denote the closure in $X$ of all simple functions from $X$ with supports of finite measure. We always have  $X_a \subset X_b$. 

The K\"{o}the dual space of a Banach function lattice  $X$, denoted by $X^{\prime}$, is the set of all $x \in L^0$ such that $\|x\|_{X^{\prime}}= \sup\{\int_I xy : \|y\| \leq 1\} < \infty$. The space $X^{\prime}$ equipped with the norm $\| \cdot \|_{X^{\prime}}$ is  also a Banach function lattice  on $(I,m)$. The space $X$ has the Fatou property if and only if $X = X''$ \cite{Z}.
A functional $H\in X^*$ is called regular whenever it has an integral representation $H(x) = \int_I xh$ for some $h\in X'$ and all $x\in X$. The collection of all regular functionals on $X$ is denoted by $X^*_r$. 
If $X_a= X_b$ and $X$ has the Fatou property then $(X_a)^*$ is isometrically isomorphic to $X'$ \cite{BS}. In this case $X^* = (X_a)^* \oplus X_a^\perp$ is isometrically isomorphic to $X' \oplus X_s^*$, where $X_s^* = X_a^\perp$ is the set of singular functionals which coincides with the set of $S\in X^*$ such that $S(x) = 0$ for every $x\in X_a$. Consequently, any $F\in X^*$ has a unique decomposition $F= H + S$, where $H \in X_r^*$ and $S\in X_s^*= X_a^\perp$ \cite{Z}. 

 The distribution function $d_x$ of a function $x\in L^0$ is given by $d_x(\lambda)=m\{t \in I : |x(t)| >\lambda\}$, $\lambda \ge 0$, and the decreasing rearrangement of $x$ is defined as $x^*(t) = \inf\{\lambda \ge 0 : d_{x}(\lambda) \leq t\}$ for $t \ge 0$.  In this paper, decreasing means non-increasing.
By this definition $x^*$ always a function defined on the interval $[0,\infty)$ with the values in $[0,\infty]$. In the case when $\gamma<\infty$, we always have $x^*(t) = 0$ for $t\ge \gamma$, so we will treat $x^*$ as a function defined only on the interval $[0,\gamma)$. In the case when $I=\mathbb{N}$, since the function $x^*$ is constant on each interval $[n-1,n)$, $n\in\mathbb{N}$, we identify it with a sequence of its values $(x^*(n-1))_{n=1}^\infty$. In fact, it coincides with  more convenient formula expressed as
$x^*(k)=(x^*(k))_{k=1}^{\infty}$, where $x^*(k) =\inf\{\lambda > 0 : d_x(\lambda) < k\}$,  $ k \in \mathbb{N}$.

We say that $x,y \in L^0$ are equimeasurable, denoted by $x\sim y$, if $d_x = d_y$ on $[0,\infty)$. A Banach lattice $(X, \|\cdot\|)$ is called a rearrangement invariant (r.i) Banach space if  for $x\in X$, $y\in L^0$ with $x\sim y$, we have $y\in X$ and $\|x\| = \|y\|$. Given a r.i. Banach space over $I=[0,\gamma)$ define its fundamental function as $\phi_X(t) = \|\chi_{[0,t)}\|$ if $t\in I$.

Comprehensive information on Banach function lattices and on rearrangement invariant spaces may be found in \cite{BS, KA, KPS, LT2}.

Recall the well known result by Hardy which remains also true for sequences $x_1=(x_1(n))_{n=1}^\infty, x_2=(x_2(n))_{n=1}^\infty $. 

\begin{Lemma} {\bf Hardy's Lemma} \cite[Proposition 3.6] {BS}
\label{lem:hardy}
 Let $x_1$ and $x_2$ be nonnegative Lebesgue measurable functions on $[0, \gamma)$, $0<\gamma\le \infty$,  and suppose $\int_{0}^{t} x_1(s)ds \leq \int_{0}^{t} x_2(s)ds$ for all $t \in [0, \gamma)$. Let $y$ be any nonnegative decreasing function on $[0,\gamma)$. Then $\int_{0}^{\gamma} x_1(s)y(s)ds \leq \int_{0}^{\gamma} x_2(s)y(s)ds$. 
\end{Lemma}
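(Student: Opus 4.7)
The plan is to reduce the weighted inequality to the hypothesized one by expressing the decreasing weight $y$ as a continuous superposition of indicator functions of initial intervals. Since $y \geq 0$ is decreasing on $[0,\gamma)$, for every $\lambda \geq 0$ the super-level set $\{s : y(s) > \lambda\}$ is an initial segment $[0, t_\lambda)$ with $t_\lambda = \min\{d_y(\lambda), \gamma\}$, which yields the layer-cake representation
$$y(s) = \int_0^\infty \chi_{[0, t_\lambda)}(s)\, d\lambda, \qquad s \in [0,\gamma).$$

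Plugging this into the integrals of interest and applying Tonelli's theorem (legitimate since all integrands are nonnegative) gives, for $i = 1, 2$,
$$\int_0^\gamma x_i(s)\, y(s)\, ds = \int_0^\infty \left( \int_0^{t_\lambda} x_i(s)\, ds \right) d\lambda.$$
The hypothesis $\int_0^t x_1 \leq \int_0^t x_2$ is given for $t \in [0,\gamma)$ and extends to $t = \gamma$ by monotone convergence, so the inner integrand for $i = 1$ is dominated pointwise in $\lambda$ by that for $i = 2$. Integrating this pointwise inequality over $\lambda \in (0, \infty)$ produces the desired conclusion.

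I do not foresee a genuine obstacle. The measurability of $\lambda \mapsto t_\lambda$, needed to invoke Tonelli, is automatic because $d_y$ is itself decreasing and hence Borel measurable. The only delicate point is the extension of the hypothesis to the right endpoint $t = \gamma$, handled above by monotone convergence. For the sequence-space analogue mentioned before the statement, the same scheme adapts verbatim, replacing the layer cake by the Abel summation $y(k) = \sum_{j \geq k}(y(j) - y(j+1))$ (after first subtracting the constant tail $\lim_{j \to \infty} y(j)$ should it be nonzero) and using Fubini for nonnegative double series in place of Tonelli.
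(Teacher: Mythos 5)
The paper offers no proof of this lemma: it is quoted directly from Bennett and Sharpley \cite[Proposition 3.6]{BS}, so there is no internal argument to compare against. Your proof is correct and is essentially the continuous form of the standard one: the layer-cake identity $y(s)=\int_0^\infty \chi_{[0,t_\lambda)}(s)\,d\lambda$ holds for a.e.\ $s$ (exact pointwise equality can fail at the countably many discontinuities of the decreasing function $y$, but a.e.\ equality is all Tonelli needs), and then the hypothesis applied to each initial interval $[0,t_\lambda)$ integrates to the conclusion. The proof in \cite{BS} does the same decomposition discretely, approximating $y$ from below by simple decreasing functions $\sum_j a_j\chi_{[0,t_j)}$ and passing to the limit by monotone convergence; the two routes are interchangeable.
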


The function $\varphi:\mathbb{R}_+\to \mathbb{R}_+$ is called an Orlicz function if $\varphi$ is convex,  $\varphi(0)=0$ and $\varphi(u)>0$ for $u> 0$.  The $\Delta_2$ condition of an Orlicz function $\varphi$ plays a key role in the theory of Orlicz spaces and their generalizations. There are three versions of this condition that depend on the measure space and on the  particular property under investigation. We say that  $\varphi$ satisfies the $\Delta_2$ (resp., $\Delta_2^\infty$; $\Delta_2^0$)  condition if there exists $K>0$ (resp., there exist  $ K>0$,  $u_0\ge 0$; $K>0, u_0 > 0$) such that $\varphi(2u) \leq K \varphi(u)$ for all $u\geq 0$ (resp., $u\ge u_0$; $0\le u\le u_0$).
 We say that $\varphi$ is an Orlicz $N$-function if $\varphi$ is an Orlicz function with $\lim_{u \rightarrow 0^+} {\varphi(u)}/{u} = 0$ and $\lim_{u \rightarrow \infty} {\varphi(u)}/{u} = \infty$. 
 The complementary function of $\varphi$, denoted by $\varphi_{\ast}$, is defined by $\varphi_{\ast}(u) = \sup\{uv- \varphi(v) : v \geq 0\}, u \in \mathbb{R}_+$. In fact, it is the restriction to $\mathbb{R}_{+}$ of the Legendre-Fenchel conjugate to an Orlicz function $\varphi$ on $\mathbb{R}_{+}$ extended to the entire real line as $\varphi(u) = \infty$ for $u \in (-\infty, 0)$. It follows that the Young's inequality, $uv \le \varphi(u) + \varphi_*(v)$ for all $u,v\in \mathbb{R}_+$, holds.
   
The following lemma describes useful equivalent expressions  of the $\Delta_2$, $\Delta_2^{\infty}$ and $\Delta_2^0$ conditions.

\begin{Lemma} \cite[Theorem 1.13] {Chen}  \label{le:C} 
An Orlicz function $\varphi$ satisfies the $\Delta_2$ 
(resp., $\Delta_2^\infty$; $\Delta_2^0$) condition if and only if there exist $l>1$ and $K>1$ (resp.,  $l>1, K>1$,  $u_0\ge 0$; $l>1, K>1, u_0 > 0$) such that $\varphi(lu) \leq K \varphi(u)$ for all $u\ge 0$ (resp., $u\ge u_0$; $0\le u\le u_0$). 
\end{Lemma}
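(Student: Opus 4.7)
The forward implication is immediate in each of the three cases: if $\varphi(2u)\le K\varphi(u)$ on the relevant range, then the stated inequality with $l=2$ is just the hypothesis itself.

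For the converse, the plan is to iterate the dilation inequality until the expansion factor exceeds $2$, and then use convexity of $\varphi$ to descend back to $2u$. Concretely, fix $l>1$, $K>1$ with $\varphi(lu)\le K\varphi(u)$ on the relevant range, and choose $n\in\mathbb{N}$ with $l^n\ge 2$. Iterating the hypothesis $n$ times yields
\[
\varphi(l^n u)\le K^n \varphi(u)
\]
on the same range (or, in the $\Delta_2^0$ case, on a slightly smaller range). Since $\varphi$ is convex with $\varphi(0)=0$, the representation $2u=(2/l^n)(l^n u)+(1-2/l^n)\cdot 0$ together with Jensen's inequality gives
\[
\varphi(2u)\le \tfrac{2}{l^n}\varphi(l^n u) + \bigl(1-\tfrac{2}{l^n}\bigr)\varphi(0) = \tfrac{2}{l^n}\varphi(l^n u)\le \tfrac{2K^n}{l^n}\varphi(u)\le K^n\varphi(u),
\]
which is the required $\Delta_2$-type estimate with dilation factor $2$ and constant $K^n$.

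The only case-dependent bookkeeping concerns the range of validity. For $\Delta_2$ the conclusion holds for all $u\ge 0$ unchanged. For $\Delta_2^\infty$ each iteration preserves the threshold, since $u\ge u_0$ implies $l^k u\ge u_0$ for every $k\ge 0$, and the final estimate holds for all $u\ge u_0$. For $\Delta_2^0$ the $k$-th iteration requires $l^{k-1}u\le u_0$, so after $n$ steps the estimate holds for $u\in[0,u_0/l^{n-1}]$; since $u_0/l^{n-1}>0$, this is a legitimate threshold for the $\Delta_2^0$ condition in the form with dilation factor $2$.

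I anticipate no substantive obstacle here. The one piece of care needed is tracking how the threshold shrinks under the finitely many iterations in the $\Delta_2^0$ case; the $\Delta_2$ and $\Delta_2^\infty$ cases are fully transparent.
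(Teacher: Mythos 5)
Your argument is correct. Note that the paper offers no proof of this lemma at all --- it is quoted verbatim from Chen's \emph{Geometry of Orlicz Spaces} (Theorem 1.13) --- so there is nothing internal to compare against; your write-up simply supplies the standard argument behind the citation. The two ingredients you use (iterating the dilation inequality until $l^n\ge 2$, then descending to the factor $2$ via $\varphi(tv)\le t\varphi(v)$ for $t\in[0,1]$, which follows from convexity and $\varphi(0)=0$) are exactly what is needed, and your bookkeeping of the thresholds is right: the range is preserved in the $\Delta_2$ and $\Delta_2^\infty$ cases and shrinks to $[0,u_0/l^{n-1}]$ in the $\Delta_2^0$ case, which is still an admissible positive threshold. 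The only cosmetic remark is that in the forward direction the definition only guarantees some $K>0$, whereas the lemma asserts $K>1$; this is harmless since $K$ may always be enlarged (and in fact convexity with $\varphi(0)=0$ forces $\varphi(2u)\ge 2\varphi(u)$, so any admissible $K$ is at least $2$).
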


\section{Function spaces}

In this section we will consider the Orlicz-Lorentz function space  on $I=[0,\gamma)$.   A positive decreasing function $w: I\to (0,\infty)$ is called a weight function whenever it is locally integrable, that is, $W(t) = \int_0^t w < \infty$ for all $t\in I$. We denote $W(\infty) = \int_{0}^{\infty} w$ if $\gamma= \infty$. 
Given an Orlicz function $\varphi$, a weight function $w$ and the Lebesgue measure $m$ on $\mathbb{R}_+$, for any $x\in L^0$, the modular $\rho$ is defined by
\begin{eqnarray*}
\rho(x) = \rho_{\varphi, w}(x)&=&\int_0^{\gamma} \varphi(x^*(t))w(t)dm(t) = \int_I \varphi(x^*)w.
\end{eqnarray*}
The modular $\rho$ is convex and  orthogonally subadditive, that is, for $x,y\in L^0$, $\rho((x+y)/2) \le (\rho(x) + \rho(y))/2$  and if $|x|\wedge |y| =0$ then $\rho(x+y) \le \rho(x) + \rho(y)$, where 
$|x|\wedge |y| = \min\{|x|, |y|\}$. 
 The  Orlicz-Lorentz space $\Lambda_{\varphi, w}$ is the set of all  $x\in L^0$ such that $\rho(\lambda x) < \infty$ for some $\lambda >0$. It is a r.i. Banach space when the space is equipped with the Luxemburg norm, that is,
\[
 \|x\| = \|x\|_{\varphi,w} = \inf \{ \epsilon>0: \rho(x/\epsilon) \le 1\}.
 \]
 
 The space $(\Lambda_{\varphi,w}, \|\cdot\|)$ satisfies the Fatou property.  Moreover \cite{K},  
 \begin{equation}\label{eq:1}
 (\Lambda_{\varphi, w})_a= (\Lambda_{\varphi, w})_b = \{f \in L^0 :  \forall \lambda,  \ \rho(\lambda f) < \infty \}.
 \end{equation}
For  locally integrable $x,y\in L^0$, we write $x\prec y$ if $\int_0^t x^* \le \int_0^t y^* $ for all $t\in I$.  Given an Orlicz function $\varphi$  and a weight function $w$, the modular $P_{\varphi,w}$ is defined by
\[
P(x)=P_{\varphi,w}(x) = \inf\left\{\int_I \varphi\left(|x|/{v}\right) v:  v\prec w, v\ge 0\right\},\ \ \ \ x\in L^0,\\
\]
and the corresponding function space by
\[
\mathcal{M}_{\varphi,w}= \{x\in L^0, \ \ \ P(\lambda x) < \infty\ \ \text{for some} \ \ \ \lambda > 0\}.
\]
The space is equipped with either the Amemiya norm 
\[
\|x\|_\mathcal{M}^0= \|x\|^0_{\mathcal{M}_{\varphi,w}} = \inf_{k>0} \frac{1}{k}(P(kx) + 1) < \infty, 
\]
or the Luxemburg norm  
\[
 \|x\|_\mathcal{M} =  \|x\|_{\mathcal{M}_{\varphi,w}}= \inf\{\epsilon > 0: P(x/\epsilon) \le 1\}.
 \]
Both norms are equivalent \cite{KLR},
$P (x) = P(x^*)$ and the space $\mathcal{M}_{\varphi,w}$ equipped with either norm  is an r.i. Banach  space \cite{KR}. In this paper, $\mathcal{M}_{\varphi,w}$ and $\mathcal{M}^0_{\varphi,w}$ stand for the space $\mathcal{M}_{\varphi,w}$ equipped with the Luxemburg norm and the Amemiya norm, respectively. 

In view of (\ref{eq:1}) and  \cite[Theorem 2.2]{KLR} we get the following result on bounded linear functionals on 
$\Lambda_{\varphi,w}$.

\begin{Theorem} \label{th:KLR}
\label{th:01}

Let  $w$ be a decreasing weight function and $\varphi$ be an Orlicz $N$-function.  Then the K\"othe dual space to Orlicz-Lorentz space $\Lambda _{\varphi ,w}$ is expressed as
\begin{equation*}
\left( \Lambda _{\varphi ,w}\right) ^{\prime }=\mathcal{M}_{\varphi _{\ast
},w}^{0}
\end{equation*}%
with equality of norms. 
Moreover any $F\in (\Lambda_{\varphi,w})^*$ is uniquely represented as $F=H+S$, where $H$ is a regular functional such that for some $h\in \mathcal{M}_{\varphi_*,w}^0$ we have
\[
H(x) = \int_I xh, \ \ \ \ x\in \Lambda_{\varphi,w},
\]
with $\|H\| = \|h\|^0_{\mathcal{M}_{\varphi_*,w}}$, and $S$ is a singular functional such that
\[
S(x) = 0 \ \ \ \text{for all} \ \ \ \ x\in (\Lambda_{\varphi,w})_a.
\]
\end{Theorem}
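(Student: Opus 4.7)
The proof is essentially an assembly of two inputs already on the page. My plan is to first invoke the main theorem of \cite{KLR} to identify the K\"othe dual, and then apply the abstract decomposition of $X^*$ for a Banach function lattice with the Fatou property and $X_a=X_b$ which was recalled in the Introduction. The only real work is to verify the structural hypotheses for $\Lambda_{\varphi,w}$ and then reconcile the two identifications isometrically.

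First, I would record that $\Lambda_{\varphi,w}$ is an r.i.\ Banach function space with the Fatou property, as stated right after the definition of $\|\cdot\|_{\varphi,w}$, and that $(\Lambda_{\varphi,w})_a=(\Lambda_{\varphi,w})_b$ by \eqref{eq:1}. Combined with the general fact quoted in the Introduction, these two properties give
\[
(\Lambda_{\varphi,w})^* \;\cong\; (\Lambda_{\varphi,w})' \oplus (\Lambda_{\varphi,w})_a^\perp
\]
isometrically, where the first summand consists of the regular functionals (those with integral representation against an element of the K\"othe dual) and the second summand consists of the singular functionals, i.e.\ those vanishing on the order continuous part. This gives the unique decomposition $F=H+S$, the identification $S \in (\Lambda_{\varphi,w})_a^\perp$, and the isometry $\|H\|=\|h\|_{(\Lambda_{\varphi,w})'}$ where $h$ is the density of $H$.

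Next, I would quote \cite[Theorem 2.2]{KLR}, which states precisely that for an Orlicz $N$-function $\varphi$ and a decreasing weight $w$,
\[
(\Lambda_{\varphi,w})' = \mathcal{M}_{\varphi_*,w}^{0}
\]
with equality of norms. Substituting this identification into the abstract decomposition above, the regular part $H$ is represented by some $h \in \mathcal{M}_{\varphi_*,w}^{0}$ via $H(x)=\int_I xh$ with $\|H\|=\|h\|^0_{\mathcal{M}_{\varphi_*,w}}$, and the singular part $S$ vanishes on $(\Lambda_{\varphi,w})_a$; uniqueness of the decomposition is inherited from the abstract setting.

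There is no genuine obstacle, since both the K\"othe dual identification and the regular/singular splitting are quoted results; the only mild care point is making sure the isometry is preserved through the identification, which is immediate because \cite[Theorem 2.2]{KLR} asserts \emph{equality} of norms rather than merely isomorphism. I would end by emphasizing that this dual representation is the tool that drives all subsequent arguments in Section 2, namely the availability of concrete functionals of Amemiya type to test the diameter two property and the separable-dual argument for the Radon--Nikod\'ym property.
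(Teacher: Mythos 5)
Your proposal is correct and follows exactly the route the paper takes: the paper derives this theorem in one line by combining \eqref{eq:1} (which gives $(\Lambda_{\varphi,w})_a=(\Lambda_{\varphi,w})_b$), the Fatou property of $\Lambda_{\varphi,w}$, the general decomposition $X^*=X'\oplus X_a^\perp$ recalled in the Introduction, and the isometric K\"othe dual identification from \cite[Theorem 2.2]{KLR}. No discrepancies.
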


\begin{Proposition}\label{prop:fund}
Let $\varphi$ be an Orlicz $N$-function. Then the fundamental function $\phi_\mathcal{M}$ of the space $(\mathcal{M}_{\varphi,w}, \|\cdot\|_\mathcal{M})$ is expressed as
\[
\phi_\mathcal{M}(t) = \frac{t}{W(t)}\Big{/}\varphi^{-1}\left(\frac{1}{W(t)}\right), \ \ \ t\in (0,\gamma).
\] 
Consequently,  $lim_{t\to 0+} \phi_\mathcal{M}(t) = 0$.
\end{Proposition}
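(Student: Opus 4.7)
The plan is to compute the Luxemburg norm $\|\chi_{[0,t)}\|_{\mathcal{M}}$ directly from its definition, which requires a closed form for the modular $P(\chi_{[0,t)}/\epsilon) = \inf\{\int_I \varphi(\chi_{[0,t)}/(\epsilon v))\,v : v \prec w,\ v \ge 0\}$. Since $\chi_{[0,t)}$ is supported on $[0,t)$, one may take $v$ also supported on $[0,t)$, reducing the task to minimizing $\int_0^t \varphi(1/(\epsilon v))\,v\,ds$ over nonnegative $v$ with $v \prec w$. I will show this minimum equals $W(t)\,\varphi(t/(\epsilon W(t)))$ and then invert.

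For the upper bound I test $v := (W(t)/t)\chi_{[0,t)}$. By concavity of $W$ with $W(0)=0$, the quotient $W(s)/s$ is nonincreasing, which gives $\int_0^s v \le W(s)$ for all $s$, confirming $v \prec w$; direct substitution produces the claimed value. The matching lower bound is the crux. The key observation is that the perspective $\psi(u) := u\,\varphi(1/(\epsilon u))$ is convex on $(0,\infty)$ with $\psi(0^+) = \infty$ by the $N$-function condition, so I may restrict to $v > 0$ a.e.\ on $[0,t)$. Jensen's inequality then yields
\[
\int_0^t \psi(v)\,ds \;\ge\; t\,\psi(V/t) \;=\; V\,\varphi\!\left(\tfrac{t}{\epsilon V}\right), \qquad V := \int_0^t v,
\]
and the majorization $v \prec w$ combined with $w = w^*$ forces $V \le \int_0^t v^* \le W(t)$. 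Finally, $g(V) := V\varphi(t/(\epsilon V))$ is nonincreasing in $V$ because $g'(V) = \varphi(y) - y\varphi'(y) \le 0$ for $y := t/(\epsilon V)$, by convexity of $\varphi$ with $\varphi(0) = 0$. Hence $g(V) \ge g(W(t))$, and the two bounds coincide.

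Inverting $W(t)\varphi(t/(\epsilon W(t))) \le 1$ through $\varphi^{-1}$ yields the stated formula for $\phi_{\mathcal{M}}(t)$. For the limit as $t \to 0^+$, note that $W(t) \to 0$ forces $\varphi^{-1}(1/W(t)) \to \infty$ by the $N$-function property, while $t/W(t)$ is nondecreasing in $t$ (since $W$ is concave) and hence bounded above as $t$ decreases to $0$; dividing a bounded numerator by a divergent denominator gives $\phi_{\mathcal{M}}(t) \to 0$.

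The main technical obstacle I anticipate is the lower bound: one must exploit convexity of $\varphi$ in two distinct ways, namely to obtain convexity of the perspective $\psi$ (for Jensen) and to obtain the monotonicity of $g$ (via $\varphi(y) \le y\varphi'(y)$), and then correctly deploy $v \prec w$ in its integral form $\int_0^t v \le W(t)$ rather than as any pointwise inequality.
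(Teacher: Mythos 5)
Your proof is correct, but it takes a genuinely different route from the paper's. The paper computes $P(\chi_{(0,a)})$ by invoking the level-function machinery of \cite{KLR}: it identifies $(0,a)$ as a maximal level interval with respect to $w$, forms the level function $x^0 = \frac{aw}{W(a)}\chi_{[0,a)}$, and applies \cite[Theorem 4.7]{KLR} to conclude $P(\chi_{(0,a)}) = \varphi\bigl(\tfrac{a}{W(a)}\bigr)W(a)$, after which the norm computation and the limit argument are the same as yours. You instead evaluate the infimum defining $P$ from scratch: the test function $v=(W(t)/t)\chi_{[0,t)}$ (admissible because $W(s)/s$ is nonincreasing, $W$ being concave) gives the upper bound, and the lower bound follows from convexity of the perspective $u\mapsto u\varphi(1/(\epsilon u))$ via Jensen, the estimate $\int_0^t v\le\int_0^t v^*\le W(t)$, and the monotonicity of $V\mapsto V\varphi(t/(\epsilon V))$ (equivalently, that $\varphi(u)/u$ is nondecreasing). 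Your version is self-contained and avoids citing the duality/level-function theorem, at the cost of a longer verification; the paper's is shorter but rests entirely on the external result. Both yield the same closed form $P(\chi_{[0,t)}/\epsilon)=W(t)\varphi\bigl(\tfrac{t}{\epsilon W(t)}\bigr)$, the same inversion through $\varphi^{-1}$, and the same reasoning for $\lim_{t\to 0+}\phi_{\mathcal M}(t)=0$ (boundedness of $t/W(t)$ near $0$ against $\varphi^{-1}(1/W(t))\to\infty$). One small point worth making explicit in your write-up is the convention that $\varphi(|x|/v)v=+\infty$ on $\{v=0\}\cap\{x\neq 0\}$ for an $N$-function, which is what licenses restricting to $v>0$ a.e.\ on $[0,t)$ before applying Jensen.
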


\begin{proof}
In order to compute the fundamental function $\phi_{\mathcal{M}}$ we will use the level functions discussed in \cite{KLR}.  Let $x = \chi_{(0,a)}$, $0<a<\gamma$. We have that the interval $(0,a)$ is a maximal level interval with respect to $w$ because $a$ is a maximal number such that  $\int_0^s x/W(s) \le a/W(a)$ for all $s\in (0,a)$ \cite{KLR}. Indeed for $s\in (0,a)$, $(\int_0^s x)/W(s)= s/W(s)\le a/W(a)$ since $w$ is decreasing, and $a$ is maximal since if $s>a$ then $(\int_0^s x)/W(s) = a/W(s) < a/W(a)$. Then the level function $x^0(s) = \frac{a w(s)}{W(a)} \chi_{[0,a)}(s)$. Therefore by Theorem 4.7 in \cite{KLR}, 
\[
P(x) = P(x^0)= \int_0^a \varphi\left(\frac{a}{W(a)}\right) w(s)\, ds = \varphi\left(\frac{a}{W(a)}\right) W(a).
\]
Now  it is straightforward to compute that 
\[
\|\chi_{(0,a)}\|_\mathcal{M} = \frac {a}{W(a)} \Big{/}\varphi^{-1}\left(\frac{1}{W(a)}\right).
\]
The function $a\to a/W(a)$ is increasing, so $\lim_{a\to 0} a/W(a) = L$, where $0\le L <\infty$. Moreover $lim_{a\to 0}\varphi^{-1}(1/W(a)) = \infty$. Hence  $\lim_{a\to 0+}\|\chi_{(0,a)}\|_\mathcal{M} = 0$.

\end{proof}

\begin{Theorem}\label{th:RN}
Let $\varphi$ be an Orlicz $N$-function and let $W(\infty) = \infty$ if $\gamma=\infty$. If $\varphi$ satisfies the $\Delta_2$ condition for $\gamma = \infty$, or $\varphi$ satisfies the $\Delta^\infty_2$ condition for $\gamma < \infty$, then $\Lambda_{\varphi,w}$ is a separable dual space. Consequently, $\Lambda_{\varphi,w}$ has the Radon-Nikod\'ym property. 
\end{Theorem}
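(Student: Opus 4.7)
The plan is to show in four steps that (i) every element of $\Lambda_{\varphi,w}$ is order continuous under the $\Delta_2$ hypothesis, (ii) $\Lambda_{\varphi,w}$ is therefore separable, (iii) $\Lambda_{\varphi,w}$ is isometric to the Banach dual of $(\mathcal{M}_{\varphi_*,w}^0)_a$, and (iv) every separable Banach dual has the Radon-Nikod\'ym property by Stegall's theorem.

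For step (i), I would verify that $\rho(\lambda x) < \infty$ for every $x \in \Lambda_{\varphi,w}$ and every $\lambda > 0$. In the $\gamma = \infty$ case, Lemma \ref{le:C} applied to global $\Delta_2$ gives $\varphi(\lambda u) \le C_\lambda \varphi(u)$, so $\rho(\lambda x) \le C_\lambda \rho(x) < \infty$; here the hypothesis $W(\infty) = \infty$ is exactly what makes identity (\ref{eq:1}) applicable (otherwise $\chi_{[0,\infty)}$ would sit in $\Lambda_{\varphi,w}$ without being order continuous). In the $\gamma < \infty$ case, I would split $\rho(\lambda x)$ at the threshold $u_0$ of $\Delta_2^\infty$: iterated $\Delta_2^\infty$ controls $\int_{\{x^* \ge u_0\}} \varphi(\lambda x^*)w$ by $C_\lambda \rho(x)$, while $\int_{\{x^* < u_0\}} \varphi(\lambda x^*) w \le \varphi(\lambda u_0) W(\gamma) < \infty$ since $w$ is locally integrable and $\gamma < \infty$. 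Then (\ref{eq:1}) gives $\Lambda_{\varphi,w} = (\Lambda_{\varphi,w})_a = (\Lambda_{\varphi,w})_b$, and step (ii) follows at once because $(\Lambda_{\varphi,w})_b$ is the closure of simple functions with finite-measure support, hence separable.

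For step (iii), I would invoke the duality $(X_a)^* = X'$ (Bennett-Sharpley, cited in the introduction) with $X = \mathcal{M}_{\varphi_*,w}^0$. By Theorem \ref{th:01} and the Fatou property, $X' = (\Lambda_{\varphi,w})'' = \Lambda_{\varphi,w}$. The hypothesis of this duality theorem requires $X_a = X_b$, which I expect to be the main technical obstacle. To verify it, I would show that $\chi_{[0,a)}$ is order continuous in $\mathcal{M}_{\varphi_*,w}^0$ for each $a \in (0,\gamma)$: for $0 \le y_n \le \chi_{[0,a)}$ with $y_n \downarrow 0$ a.e., choosing $v = w$ in the infimum defining $P_{\varphi_*,w}$ yields $P_{\varphi_*,w}(y_n/\epsilon) \le \int_0^a \varphi_*(y_n/(\epsilon w))w$, which is dominated by the finite constant $\varphi_*(1/(\epsilon w(a)))\, W(a)$ because $w$ is decreasing; a dominated-convergence argument then forces $\|y_n\|_{\mathcal{M}}^0 \to 0$. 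Taking norm closures gives $X_b \subseteq X_a$, and the reverse inclusion is automatic. Step (iv) is then immediate.
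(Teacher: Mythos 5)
Your proposal is correct in outline and follows the same skeleton as the paper ($\Delta_2$ gives order continuity and hence separability; the duality $(X_a)^*=X'$ applied to $X=\mathcal{M}^0_{\varphi_*,w}$ identifies $\Lambda_{\varphi,w}$ as a dual space; separable duals have the Radon--Nikod\'ym property), but you handle the two technical inputs differently. For separability the paper simply cites \cite[Theorem 2.4]{K}, whereas you reprove it; your $\Delta_2^\infty$ splitting at $u_0$ is fine (just note that for $x\in\Lambda_{\varphi,w}$ you only know $\rho(\lambda_0x)<\infty$ for \emph{some} $\lambda_0$, so the iteration should compare $\varphi(\lambda x^*)$ with $\varphi(\lambda_0 x^*)$ rather than with $\varphi(x^*)$). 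The more substantive divergence is in verifying $(\mathcal{M}^0_{\varphi_*,w})_a=(\mathcal{M}^0_{\varphi_*,w})_b$: the paper computes the fundamental function $\phi_{\mathcal{M}}$ via the level-function formula of \cite{KLR} (Proposition \ref{prop:fund}) and then invokes \cite[Theorem 5.5, p.~67]{BS}, while you test $v=w$ in the infimum defining $P_{\varphi_*,w}$ and run dominated convergence. Your route is more elementary and avoids the level-function machinery entirely; what the paper's route buys is the explicit formula for $\phi_{\mathcal{M}}$ and a clean reduction to a standard theorem. Two points you should still add. First, the Bennett--Sharpley identification $(X_a)^*=X'$ also requires $X=\mathcal{M}^0_{\varphi_*,w}$ to have the Fatou property; the paper gets this for free because $\mathcal{M}^0_{\varphi_*,w}=(\Lambda_{\varphi_{**},w})'=(\Lambda_{\varphi,w})'$ is itself a K\"othe dual, and you should say the same. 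Second, order continuity of $\chi_{[0,a)}$ alone does not literally give $X_b\subseteq X_a$: you need it for $\chi_A$ with $m(A)<\infty$ arbitrary (and then for finite linear combinations), which follows from your estimate together with $P(x)=P(x^*)$ and the fact that $y_n\downarrow 0$ on a set of finite measure forces $y_n^*\downarrow 0$; one should also treat $m(A)$ close to $\gamma$ when $\gamma<\infty$ (where $\essinfB w$ may vanish) by approximating $\chi_{[0,\gamma)}$ in norm by $\chi_{[0,\gamma-1/n)}$ using $\phi_{\mathcal{M}}(0^+)=0$, which your own estimate already yields.
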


\begin{proof} If a Banach function lattice $X$ has the Fatou property and $X_a = X_b$ then  $(X_a)^* = X'$ \cite[Corollary 4.2, p. 23]{BS}.  Consider now the space $\mathcal{M}^0_{\varphi,w}$. 
By Theorem \ref{th:KLR} it is a K\"othe dual space of $\Lambda_{\varphi_*, w}$ since $\varphi_{**} = \varphi$.
By the general theory \cite{Z}, any K\"othe dual space must satisfy the Fatou property. Hence $\mathcal{M}^0_{\varphi,w}$ satisfies this property.

By Theorem 5.5 on p. 67 in \cite{BS},  if $X$ is a r.i. Banach space on a non-atomic measure space and $\lim_{t\to 0^+}\phi_X(t) =0$ then $X_a=X_b$. In view of Proposition \ref{prop:fund}, we have that $\lim_{t\to 0+} \phi_{\mathcal{M}}(t) = 0$ for the space $
\mathcal{M}_{\varphi,w}$, hence also for $
\mathcal{M}^0_{\varphi,w}$ because the Luxemburg and Amemiya norms are equivalent \cite{KLR}. Therefore 
$(\mathcal{M}^0_{\varphi,w})_a = (\mathcal{M}^0_{\varphi,w})_b$.

Note now that $\varphi$ is an Orlicz $N$-function if and only if $\varphi_*$ is an Orlicz $N$-function \cite{Chen}. Therefore all above facts remain true if we substitute  $\varphi$ by $\varphi_*$. 
Then by Theorem \ref{th:KLR} and the Fatou property of $\Lambda_{\varphi,w}$ we get $[(\mathcal{M}^0_{\varphi_*,w})_a]^*= (\mathcal{M}^0_{\varphi_*,w})' = (\Lambda_{\varphi,w})''= \Lambda_{\varphi,w}$, and so $\Lambda_{\varphi,w}$ is a dual space. By the appropriate $\Delta_2$ condition, $W(\infty) = \infty$ and  separability of the Lebesgue  measure we get that $\Lambda_{\varphi,w}$ is separable \cite[Theorem 2.4]{K}. By the well known result \cite{DU, L} it must satisfy the Radon-Nikod\'ym property.
\end{proof}

The next corollary follows from Theorem \ref{th:RN} and from the fact that any Banach space with the Radon-Nikod\'ym property must possess  slices of arbitrarily small diameters \cite{B, DU}. 

\begin{Corollary}\label{cor:weak}
Let $\varphi$ be an Orlicz $N$-function and let $W(\infty) = \infty$ if $\gamma=\infty$. If $\varphi$ satisfies the $\Delta_2$ condition for $\gamma = \infty$, or $\varphi$ satisfies the $\Delta^\infty_2$ condition for $\gamma < \infty$, then there are relatively  weakly open subsets of the unit ball $B_{\Lambda_{\varphi,w}}$ with arbitrarily small diameter.
\end{Corollary}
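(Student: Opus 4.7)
The statement is essentially a direct consequence of the preceding theorem together with two classical facts from Banach space geometry, so the plan is very short. First, I would invoke Theorem \ref{th:RN} to conclude that, under the stated hypotheses (appropriate $\Delta_2$ condition on $\varphi$ and $W(\infty)=\infty$ when $\gamma=\infty$), the space $\Lambda_{\varphi,w}$ possesses the Radon-Nikod\'ym property.

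Next, I would appeal to the classical equivalence that a Banach space $X$ has the Radon-Nikod\'ym property if and only if every nonempty closed bounded convex subset of $X$ is the closed convex hull of its denting points (see \cite{B, DU}); in particular, $B_{\Lambda_{\varphi,w}}$ contains at least one denting point $x_0$. By definition of a denting point, for every $\varepsilon>0$ there exists a slice $S(f,\alpha)=\{x\in B_{\Lambda_{\varphi,w}}:f(x)>\alpha\}$, with $f\in (\Lambda_{\varphi,w})^*$, containing $x_0$ and having diameter less than $\varepsilon$.

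Finally, I would observe that any such slice is a relatively weakly open subset of the unit ball: it is the intersection of $B_{\Lambda_{\varphi,w}}$ with the weakly open half-space $\{x\in\Lambda_{\varphi,w}:f(x)>\alpha\}$. Since the diameter of a slice can be made arbitrarily small, we obtain relatively weakly open subsets of $B_{\Lambda_{\varphi,w}}$ of arbitrarily small diameter, which is exactly what the corollary asserts. There is no genuine obstacle in this argument; the substantive work has already been carried out in Theorem \ref{th:RN}, and what remains is only the standard reformulation of the Radon-Nikod\'ym property in terms of slices and denting points.
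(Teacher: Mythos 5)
Your proposal is correct and follows the same route as the paper: the paper likewise deduces the corollary directly from Theorem \ref{th:RN} together with the classical fact (cited to \cite{B, DU}) that a space with the Radon--Nikod\'ym property has slices of arbitrarily small diameter, slices being relatively weakly open subsets of the unit ball. Your extra detour through denting points is just a slightly more explicit way of invoking that same classical fact.
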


Recall also that $\int_0^\gamma x^* y^* = \sup_{h\sim y} \int_I |x h| $, and thus 
$\int_I |x h|  \le \int_0^\gamma x^* y^* $ for every $h\sim y$. \cite{BS, KPS}

\begin{Lemma}\label{lem:ineq}  For any $x\in L^0$, a decreasing function $0\le y$ on $I$ and a measurable set $A\subset I$, we have
\[
\int_{0}^{\gamma} (x\chi_A)^*y \leq \int_{0}^{m(A)}x^* y.
\]
\end{Lemma}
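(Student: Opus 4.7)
The plan is to reduce the inequality to two elementary properties of the decreasing rearrangement of the truncated function $x\chi_A$, after which the estimate becomes immediate. Both properties are standard consequences of the definitions of the distribution function and of $(\cdot)^*$ recalled in the introduction.

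First I would observe that $|x\chi_A|\le |x|$ pointwise, so $d_{x\chi_A}(\lambda) = m\{t\in A:|x(t)|>\lambda\}\le d_x(\lambda)$ for every $\lambda\ge 0$. Feeding this into the definition $(x\chi_A)^*(t)=\inf\{\lambda\ge 0:d_{x\chi_A}(\lambda)\le t\}$ yields the pointwise bound $(x\chi_A)^*(t)\le x^*(t)$ for all $t\ge 0$. Second, because $x\chi_A$ vanishes outside $A$, $d_{x\chi_A}(\lambda)\le m(A)$ for every $\lambda\ge 0$, so $(x\chi_A)^*(t)=0$ whenever $t\ge m(A)$.

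Combining these two facts with $y\ge 0$ gives the estimate in a single line:
\[
\int_0^{\gamma}(x\chi_A)^* y \;=\; \int_0^{m(A)}(x\chi_A)^* y \;\le\; \int_0^{m(A)} x^* y,
\]
where the equality uses the support property and the inequality uses the pointwise bound together with $y\ge 0$ on $[0,m(A))$.

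There is really no obstacle in this argument; it is a bookkeeping exercise with the definition of $(\cdot)^*$, and monotonicity of $y$ is not even used. If one preferred, Hardy's Lemma would provide an alternative route: apply it with $x_1=(x\chi_A)^*$ and $x_2=x^*\chi_{[0,m(A))}$ after checking $\int_0^t x_1\le\int_0^t x_2$ for all $t\in[0,\gamma)$, which again follows from the two properties above (splitting into $t\le m(A)$ and $t>m(A)$). Since the direct approach is shorter and does not require $y$ to be decreasing, I would present it rather than invoke Hardy's Lemma.
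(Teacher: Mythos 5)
Your proof is correct, and it takes a genuinely different (and shorter) route than the paper's. The paper first establishes the integrated majorization $\int_0^t (x\chi_A)^* \le \int_0^t x^*\chi_{[0,m(A))}$ for every $t\in[0,\gamma)$, using the identity $\int_0^t h^* = \sup_{m(E)=t}\int_E |h|$, and only then multiplies by the decreasing weight $y$ via Hardy's Lemma (Lemma \ref{lem:hardy}). You instead observe the \emph{pointwise} domination $(x\chi_A)^*\le x^*\chi_{[0,m(A))}$: the bound $(x\chi_A)^*\le x^*$ follows from $d_{x\chi_A}\le d_x$ and the definition of the rearrangement as an infimum, and the vanishing of $(x\chi_A)^*$ on $[m(A),\gamma)$ follows from $d_{x\chi_A}(\lambda)\le m(A)$ for all $\lambda\ge 0$ (so $\lambda=0$ is admissible in the infimum). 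Integrating this pointwise inequality against any $y\ge 0$ finishes the proof. Both arguments are sound; yours is more elementary, bypasses both Hardy's Lemma and the supremum characterization of $\int_0^t h^*$, and proves a slightly stronger statement since monotonicity of $y$ is never used. Nothing is lost for the paper either way --- the lemma is only ever applied with $y=(\lambda h)^*$ or $y=w$, which are decreasing --- but your observation that the domination already holds pointwise makes the detour through the integrated (Hardy--Littlewood--P\'olya type) majorization unnecessary.
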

\begin{proof} Recall that $\int_{0}^{t} h^* = \sup_{m(E) = t} \int_{E} |h| $ for any $h\in L^0$ (pg 64 in \cite{KPS}). For $t \in[0, \gamma)$ we get
\begin{eqnarray*}
\int_{0}^{t} (x\chi_A)^*&=&\sup_{m(E) = t} \int_{E} |x\chi_A|=  \sup_{m(E) = t} \int_{A\cap E} |x| \\
&\leq& \sup_{m(E)=t} \int_{0}^{m(A\cap E)} x^*= \sup_{m(E) = t} \int_{0}^{\gamma} x^* \chi_{[0, m(A \cap E))} \\
&\leq& \sup_{m(E) = t} \int_{0}^{\gamma} x^* \chi_{[0, \min\{m(A), m(E)\})} =\sup_{m(E) = t} \int_{0}^{\gamma} x^* \chi_{[0, m(A))}\chi_{[0, m(E))}\\
&=& \sup_{m(E) = t} \int_{0}^{m(E)} x^* \chi_{[0, m(A))} = \int_{0}^{t} x^* \chi_{[0, m(A))}
\end{eqnarray*}
Thus, by Lemma \ref{lem:hardy}, $\int_{0}^{\gamma} (x\chi_A)^* y \leq \int_{0}^{\gamma} x^* \chi_{[0, m(A))} y$. 
\end{proof}

The next theorem is the main result in this section.

\begin{Theorem}\label{th:func}
Let $w$ be a weight function such that $W(\infty) = \infty$ if $\gamma = \infty$, and let $\varphi$ be an Orlicz $N$-function.  If $\gamma = \infty$ and $\varphi$ does not satisfy the $\Delta_{2}$ condition, or $\gamma< \infty$ and $\varphi$ does not satisfy the $\Delta^{\infty}_{2}$ condition, then the diameter of any nonempty relatively weakly open subset of the unit ball in Orlicz-Lorentz space $\Lambda_{\varphi, w}$ equipped with the Luxemburg norm is equal to two.
\end{Theorem}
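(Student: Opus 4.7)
My plan is to exhibit, inside any nonempty relatively weakly open subset $U$ of $B_{\Lambda_{\varphi,w}}$, two sequences $\tilde x_0 \pm g_k \in U$ whose mutual distance tends to $2$. After the standard reduction I assume $U = \{x \in B_{\Lambda_{\varphi,w}} : |F_i(x - x_0)| < \varepsilon,\ 1 \le i \le n\}$ is a basic weak neighborhood and first shrink the center: $\tilde x_0 := (1-\eta)x_0$ still lies in $U$ for small $\eta > 0$ by norm-continuity of scaling, and convexity of $\rho$ together with $\rho(0) = 0$ gives $\rho(\tilde x_0) \le 1 - \eta$. By Theorem~\ref{th:KLR} every $F_i$ splits as $H_i + S_i$ with $H_i(x) = \int_I x h_i$ for some $h_i \in \mathcal{M}^0_{\varphi_*,w}$ and $S_i$ singular, vanishing on $(\Lambda_{\varphi,w})_a$.

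Applying Lemma~\ref{le:C} to the failure of the appropriate $\Delta_2$ condition, I would select sequences $\delta_k \downarrow 0$ and $K_k \to \infty$ together with values $v_k$ satisfying $\varphi((1+\delta_k)v_k) \ge K_k\,\varphi(v_k)$, taking $v_k \to \infty$ (an analogous construction handles the $\Delta_2^0$-failure subcase of $\gamma=\infty$). Setting $c_k := (1+\delta_k)v_k$ and picking a measurable $A_k$ with $W(m(A_k)) = 1/\varphi(c_k)$, I define $g_k := v_k \chi_{A_k}$; then (\ref{eq:1}) places $g_k \in (\Lambda_{\varphi,w})_a$, so $S_i(g_k) = 0$, while $\|g_k\|_{\varphi,w} = 1/(1+\delta_k) \to 1$ and $\rho(g_k) = \varphi(v_k)/\varphi(c_k) \le 1/K_k \to 0$.

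The crucial refinement is to place $A_k$ inside the level set $E_k := \{|x_0| \le L_k\} \cap \bigcap_i \{|h_i| \le N_k\}$ for slowly-growing $L_k, N_k \to \infty$; since $x_0$ and the $h_i$ are a.e.\ finite, the complement of $E_k$ has measure tending to $0$ (or is of finite measure when $\gamma=\infty$), so there is enough room to place $A_k$ as $m(A_k) \to 0$. Choosing $L_k := \delta_k v_k / M_k$ with $M_k \to \infty$, the convex identity $v_k + L_k = (1 - 1/M_k)v_k + (1/M_k)c_k$ together with convexity of $\varphi$ yields $\varphi(v_k + L_k)/\varphi(c_k) \le (1 - 1/M_k)/K_k + 1/M_k \to 0$. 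Splitting $\tilde x_0 \pm g_k = \tilde x_0 \chi_{I \setminus A_k} + (\tilde x_0 \pm g_k)\chi_{A_k}$ along disjoint supports and invoking the orthogonal subadditivity of $\rho$ then yields $\rho(\tilde x_0 \pm g_k) \le \rho(\tilde x_0) + \varphi(L_k + v_k) W(m(A_k)) \le (1-\eta) + o(1) < 1$, so $\tilde x_0 \pm g_k \in B_{\Lambda_{\varphi,w}}$. Concavity of $W$ at zero and super-linearity of the $N$-function $\varphi$ force $v_k m(A_k) \to 0$, and choosing $N_k = o(\varphi(v_k)/v_k)$ produces $|H_i(g_k)| \le v_k N_k m(A_k) \to 0$. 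Hence $\tilde x_0 \pm g_k \in U$ for $k$ large, and their distance is $2\|g_k\|_{\varphi,w} \to 2$, which forces $\diam(U) = 2$.

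The main obstacle is the joint scheduling of the parameters $v_k, \delta_k, L_k, N_k, M_k$: failure of $\Delta_2$ is exactly what decouples $\|g_k\|_{\varphi,w}$ from $\rho(g_k)$, and the specific convex combination $v_k + L_k = (1 - 1/M_k)v_k + (1/M_k)c_k$ is what allows the convexity of $\varphi$ to absorb the contribution of $x_0$ on $A_k$ into the $\eta$-modular-slack, while the simultaneous suppression of the regular parts $H_i(g_k)$ hinges on restricting $A_k$ to a common level set of the $h_i$; the singular parts vanish automatically because $g_k \in (\Lambda_{\varphi,w})_a$ by (\ref{eq:1}).
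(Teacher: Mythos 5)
Your construction is a genuinely different route from the paper's (the paper perturbs a norm-one element $x\in Z$ on small sets and controls \emph{all} regular functionals at once via Young's inequality in $\mathcal{M}_{\varphi_*,w}$, whereas you shrink the center to create modular slack and control only the finitely many $h_i$ by truncating to their level sets), and in the regime where the witnesses $v_k\to\infty$ your scheme does work. But there is a genuine gap: when $\gamma=\infty$ and $\Delta_2$ fails only near zero, the witnesses must be taken with $v_k\downarrow 0$, and your construction is \emph{not} analogous there. Indeed $W(m(A_k))=1/\varphi(c_k)\to\infty$ forces $m(A_k)\to\infty$, so the level-set bound $|H_i(g_k)|\le v_kN_km(A_k)$ gives nothing (and $v_km(A_k)\to 0$ fails outright, since $W(t)/t$ may tend to $0$). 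In this subcase one has to abandon the truncation device and control $|H_i(g_k)|=\int_{A_k}v_k|h_i|$ by Young's inequality against a near-optimal $v\prec w$ in $P_{\varphi_*,w}$: the term $\varphi(v_k)\int_{A_k}v\le\varphi(v_k)W(m(A_k))\to 0$ by the $\Delta_2$ failure, and $\int_{A_k}\varphi_*(\delta|h_i|/v)v\to 0$ by absolute continuity of a finite integral, \emph{provided} the sets $A_k$ are pushed out to infinity ($A_k\subset[a_k,\infty)$ with $a_k\to\infty$). That tail placement and the switch of estimate are the missing ideas; they are exactly what the paper's second case supplies.

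A smaller but real issue in the case you do treat: you need simultaneously $M_k\to\infty$ (for the convexity estimate $\varphi(v_k+L_k)/\varphi(c_k)\le(1-1/M_k)/K_k+1/M_k\to 0$) and $L_k=\delta_kv_k/M_k$ bounded away from $0$ (so that $\{|x_0|\le L_k\}$ keeps positive measure and $A_k$ can be placed inside it — if $x_0$ is bounded below on $[0,\gamma)$, $\gamma<\infty$, this set is empty for small $L_k$). Both hold only if $\delta_kv_k\to\infty$, which is not automatic from an arbitrary choice of witnesses: you must additionally invoke the $u_0$-freedom in the failure of $\Delta_2^\infty$ to select $v_k\ge k/\delta_k$, say. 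This is fixable, but as written the parameters $\delta_k,v_k,L_k,M_k$ cannot always be scheduled consistently. Everything else — the disjoint splitting, $\|g_k\|=1/(1+\delta_k)\to1$, $\rho(\tilde x_0\pm g_k)\le(1-\eta)+o(1)$, vanishing of the singular parts on $g_k\in(\Lambda_{\varphi,w})_a$, and the conclusion $\diam(U)\ge 2\|g_k\|\to2$ — is sound.
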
 

\begin{proof} 

Let $Z$ be a nonempty relatively weakly open subset of the unit ball in Orlicz-Lorentz space $\Lambda_{\varphi, w}$. Then we can find an element $x \in Z$ such that $\norm{x}=1$. We have  $d_x(\lambda) < \infty$ for any $\lambda > 0$. In fact
\[
1 \ge \rho(x) = \int_0^\gamma \varphi(x^*)w 
\ge \int_{\{s\in [0,\gamma): \  x^*(s) > \lambda\}} \varphi(\lambda) w = \varphi(\lambda) \int_0^\beta w,
\]
where $\beta\le \infty$ is such that the intervals $(0,\beta)$ and $\{s\in [0,\gamma): \ x^*(s) >\lambda\}$ have equal measure. By the assumption $W(\infty) = \infty$ if $\gamma = \infty$, we must have $\beta < \infty$ and so $d_x(\lambda) = d_{x^*}(\lambda) = \beta < \infty$. 

Choose $c>0$ and a Lebesgue measurable set $E\subset [0,\gamma)$ with $m(E)>0$ and $|x(t)| \leq c$ on $E$.  From the fact that $d_x(\lambda)<\infty$ for all $\lambda > 0$, if $\gamma =\infty$, we have $m\{t\in I: |x(t)|\le c\} = \infty$. Hence we choose $E$ such that $|x(t) |\le c$ for $t\in E$ and  $m(E) = \infty$.

Suppose $\varphi$ does not satisfy the $\Delta_{2}$ condition when $\gamma=\infty$, or $\varphi$ does not satisfy the $\Delta_{2}^\infty$ condition when $\gamma< \infty$. Then by Lemma \ref{le:C} there exists $(t_n) \subset (0,\infty)$ such that
for all $n\in\mathbb{N}$, 
\begin{equation}\label{ineq:1}
\varphi\left( \left(1+\frac{1}{n}\right)t_n\right)>2^n\varphi(t_n). 
\end{equation}
Assume  without loss of generality that $t_n \uparrow \infty$ when $\gamma < \infty$ and $t_n \uparrow \infty$ or $t_n \downarrow 0$ when $\gamma = \infty$. 

We consider first when  $t_n \uparrow \infty$. Assume that $m(E)<\infty$.   Then we choose a disjoint sequence of measurable sets $E_n \subset E$ such that for  $n\in \mathbb{N}$,  
\begin{equation}\label{eq:2}
\int_{0}^{m( E_n)}w= \frac{1}{2^n \varphi(t_n)}. 
\end{equation}
Indeed, since $\frac{1}{2^n \varphi(t_n)} \le \frac{1}{2^n\varphi(t_1)}$,  $\sum_{n=1}^{\infty} \frac{1}{2^n\varphi(t_n)} <\infty$, and so  $\sum_{n=n_0}^{\infty} \frac{1}{2^n\varphi(t_n)}< \int_{0}^{m(E)}w$ for some $n_0\in\mathbb{N}$. Then we can find a disjoint sequence of measurable sets $E_{n}\subset E$ such that $\int_{0}^{m(E_{n})}w = \frac{1}{2^{n_0+n}\varphi(t_{n_0+n})}$, $n\in \mathbb{N}$. Without loss of generality we can assume further that $n_0=0$. Since $m(E) < \infty$, $m(E_n)\to 0$. 

Now let  $t_n \downarrow 0$ and  $\gamma = \infty$.  We can still choose  a disjoint sequence of measurable sets $(E_n)$ satisfyting   equation (\ref{eq:2}) and $E_n \subset E$, where $m(E) = \infty$.
Indeed, there exists $(I_n)$, an increasing sequence of measurable subsets of $[0,\infty)$ such that $\cup_{n=1}^{\infty} I_n = [0,\infty)$ and $m (I_n) <  \infty$. Thus $E = \cup_{n=1}^{\infty} E\cap I_n$, where $m(E\setminus  I_n) = \infty$.
 By the continuity of $W$, $W(0)= 0$, and $W(m(E \setminus I_n)) = W(\infty)  = \infty$,  there exist $a_n> 0$ such that $\int_0^{a_n} w = \frac{1}{2^n \varphi(t_n)}$, $n\in \mathbb{N}$.  From the fact that 
$m( E\setminus I_n) = \infty$,
 there exists a disjoint sequence $(E_n)$ of measurable sets satisfying (\ref{eq:2}) and such that $E_n\subset E \setminus I_n$ with $m (E_n) = a_n$.  In this case by (\ref{ineq:1}) we have $m(E_n) \to \infty$.  

Define 
 \[ 
  x_n ' = x\chi_{I \setminus E_n} + t_n\chi_{E_n} \ \ \ \text{and}\ \ \  x_n '' = x\chi_{I \setminus E_n} - t_n\chi_{E_n}.
\]
Note first that $x_n ' \rightarrow x$ and $x_n'' \rightarrow x$  $m$-a.e. on $I$ according to the fact that $E_n$ are disjoint. By the  Fatou property of $\Lambda_{\varphi, w}$, $1=\norm{x} \leq \liminf \norm{x_n'}$, and $1=\norm{x} \leq \liminf \norm{x_n''}$. We will show that $\lim_{n \rightarrow \infty} \norm{x_n '}=1$ and $\lim_{n \rightarrow \infty} \norm{x_n ''}=1$. By the orthogonal subadditivity of $\rho$, Lemma \ref{lem:ineq} and (\ref{eq:2}) we get
\begin{eqnarray*}
\rho(x_n ') &=& \int_{0}^{\gamma} \varphi (x \chi_{I \setminus E_n}+ t_n \chi_{E_n})^*w \leq  \int_{0}^{\gamma} \varphi (x \chi_{I \setminus E_n})^*w + \int_{0}^{\gamma} \varphi (t_n \chi_{E_n})^*w \\
&=&  \int_{0}^{\gamma} (\varphi (|x|)\chi_{I \setminus E_n})^* w+ \int_{0}^{m E_n} \varphi(t_n)w 
\leq \int_{0}^{\gamma} \varphi(|x|)^* \chi_{[0,m (I \setminus  E_n))} w+ \int_{0}^{m E_n} \varphi(t_n) w \\
&\leq& \int_{0}^{\gamma} \varphi(x^*) w + \frac{1}{2^n}
= \rho(x) + \frac{1}{2^n}.
\end{eqnarray*}
Hence
 $\limsup _{n\to\infty}\rho (x_n ') \leq \rho(x) \leq 1$. Then for any $\epsilon > 0$ there exists $n_0$ such that for all $n\geq n_0$, $\rho (x_n') \leq 1 + \epsilon$. It follows by the convexity of $\rho$ that for all $n \geq n_0$, $\rho (x_n'/(1 + \epsilon)) \leq 1$.   Therefore  for all $n \geq n_0, \norm{x_n'} \leq 1 + \epsilon$.
This implies   $\limsup_{n\to\infty} \norm{x_n'} \leq 1$ and proves that $\lim_{n \rightarrow \infty} \norm{x_n '}=1$. Analogously, we get that  $\lim_{n \rightarrow \infty} \norm{x_n ''}=1$.

Let $F$ be a bounded linear functional on $\Lambda_{\varphi, w}$. Then $F=H+S$, where $H$ is the integral functional associated to $h \in \mathcal{M}_{\varphi_* , w}$,  and $S$ is a singular functional identically equal to zero on $(\Lambda_{\varphi, w})_a$ by Theorem \ref{th:KLR}.
We claim that $x-x_n '\in (\Lambda_{\varphi, w})_a$. Note that on $E_n \subset E$ the function $|x|$ is bounded by $c$, so $ |(x-x_n ')(t)|=|x(t)-t_n|\chi_{E_n}(t) \leq (|x(t)|+t_n) \chi_{E_n}(t) \leq (c+t_n)\chi_{E_n}(t)$ on $I$. Then for any $\lambda >0$,
\begin{eqnarray*}
\rho(\lambda(x-x_n'))= \int_{0}^{\gamma} \varphi(\lambda |x-t_n|\chi_{E_n})^* w \leq  \int_{0}^{\gamma} \varphi(\lambda(c+ t_n)\chi_{E_n})^*w 
= \varphi(\lambda(c+t_n)) \int_{0}^{m E_n}w< \infty,
\end{eqnarray*}
 which shows the claim. Hence $S(x- x_n') = 0$ and $F(x-x_n ') =  H(x - x_n') = \int_{E_n} xh dm - \int_{E_n} t_n h dm$.

 Assume  first when $t_n\uparrow \infty$. So  $E_n\subset E$ and $m(E_n)\to 0$.  Since $h \in \mathcal{M}_{\varphi_*, w}$, $P_{\varphi_*, w}(\lambda h) < \infty$ for some $\lambda>0$, and if we let $0 \leq v \in L^0$ and $v  \prec w$, we obtain 

\begin{eqnarray*}
|F(x-x_n ')| &\le&
\lambda^{-1}\int_{I}|x\chi_{E_n}|\, |\lambda h| dm + \lambda^{-1} \int_{I} t_n \chi_{E_n} |\lambda h | dm\\
&\le& \lambda^{-1}\int_0^\gamma (x\chi_{E_n})^*\, (\lambda h)^*  + \lambda^{-1} \int_0^\gamma (t_n \chi_{E_n})^* (\lambda h)^*  \ \ \text{(by Lemma \ref{lem:ineq}}) \\
&\le& \lambda^{-1}\int_0^{m(E_n)} x^*\, (\lambda h^*)  + \lambda^{-1} \int_0^{m(E_n)} t_n  (\lambda h^*) \\
&=& \lambda^{-1}\int_0^{m(E_n)} x^*\, \left( \frac{\lambda h^*}{v} \right) v  + \lambda^{-1} \int_0^{m(E_n)} t_n  \left( \frac{\lambda h^*}{v} \right) v \ \ \text{(by Young's inequality)}\\
&\le&\lambda^{-1} \int_0^{m({E_n})} \left\{ \varphi(x^*)v +\varphi_* \left( \frac{\lambda h^*}{v} \right)v\right\}  + \lambda^{-1} \int_0^{m({E_n})} \left\{ \varphi(t_n)v+ \varphi_* \left( \frac{\lambda h^*}{v} \right) v \right\}. 
\end{eqnarray*}
Due to $v \prec w$, $\int_0^{m({E_n})}\varphi(x^*)v \leq \int_0^{m({E_n})}\varphi(x^*)w$ by Lemma \ref{lem:hardy}.  Taking now the infimum of $\int_0^{m(E_n)} \varphi_* \left(\frac{\lambda h^*}{v} \right)v$ over  $v\prec w$ we get

\begin{eqnarray*}
|F(x-x_n ')| &\leq& \lambda^{-1} \left( \int_0^{m({E_n})} \varphi(x^*) v + 2 \inf \left\{ \int_0^{m(E_n)} \varphi_* \left(\frac{\lambda h^*}{v} \right)v: v \prec w \right \} + \varphi(t_n) \int_0^{m(E_n)} v \right)\\
&\leq&  \lambda^{-1} \left( \int_0^{m({E_n})} \varphi(x^*) w + 2 \inf \left\{ \int_0^{m(E_n)} \varphi_* \left(\frac{\lambda h^*}{v} \right)v: v \prec w \right \} + \varphi(t_n) \int_0^{m(E_n)} w \right).
\end{eqnarray*}
We have $\int_0^{m(E_n)} \varphi(t_n)w = 1/2^n \to 0$ by (\ref{eq:2}). Moreover, $\rho(x)$ is finite and $m(E_n) \to 0$, so $\int_0^{m(E_n)} \varphi(x^*)w \rightarrow 0$. Also  $P_{\varphi_*,w}(\lambda h)$ is finite, so there exists $v_1 \prec w$ such that $\int_0^{\gamma} \varphi_* \left(\frac{\lambda h^*}{v_1} \right) v_1 \leq P_{\varphi_*, w}(\lambda h^*) + 1 < \infty$. Hence $\int_0^{m(E_n)} \varphi_* \left(\frac{\lambda h^*}{v_1} \right) v_1 \rightarrow 0$ as $n \rightarrow \infty$,  and so $ \inf \left\{ \int_0^{m(E_n)} \varphi_* \left(\frac{\lambda h^*}{v} \right)v: v \prec w \right \} \leq \int_0^{m(E_n)} \varphi_* \left(\frac{\lambda h^*}{v_1} \right) v_1 \rightarrow 0$. Thus $|F(x- x_n')| \rightarrow 0$ as $n\to\infty$. For $x_n''$ we show the same, so both $(x_n')$ and $(x_n'')$ converge weakly to $x$.

Now consider the second case when $\gamma = \infty$ and $t_n\downarrow 0$. For some $\lambda > 0$,  $P_{\varphi_*, w}(\lambda h) < \infty$. Then there exists  $0 \leq v \in L^0$,  $v  \prec w$ with $\int_I \varphi_*\left({\lambda |h|}/{v}\right) v \le P_{\varphi_*, w}(\lambda h) + 1<\infty $. From the Young's inequality,
\begin{eqnarray*}
|F(x-x_n')| &\le&
\lambda^{-1}\int_{I}|x\chi_{E_n}| |\lambda h|  + \lambda^{-1} \int_{I} t_n \chi_{E_n} |\lambda h | \\
&=& \lambda^{-1}\int_{E_n}|x|\,\left(\frac{|\lambda h|}{v}\right)v  + \lambda^{-1} \int_{E_n} t_n \left(\frac{|\lambda h |}{v}\right) v\\
&\leq& \lambda^{-1}\left(\int_{E_n}\varphi(|x|)v + 2\int_{E_n} \varphi_*\left(\frac{\lambda |v|}{v}\right) v  + \varphi(t_{n}) \int_{E_n} v \right).
\end{eqnarray*}
 By Lemma \ref{lem:hardy} and by $v\prec w$, $\int_{I} \varphi(|x|) v \le \int_I \varphi(x^*) v^* \le \int_I \varphi(x^*) w =\rho(x) < \infty$. Due to the construction of $E_n$ we get $E_n\subset I \setminus I_n$, where the sequence $(I\setminus I_n)$ is decreasing with $m\bigcap(I\setminus I_n)=0$. Therefore as $n\to\infty$,
\[
\int_{E_n} \varphi(|x|) v \le  \int_{I\setminus I_n} \varphi(|x|)v\to 0. 
\]
By the choice of $v$, $\int_I\varphi_*(\lambda|h|/v) v <\infty$. Hence
\[
\int_{E_n} \varphi_*\left(\frac{\lambda |h|}{v}\right) v\le 
\int_{I\setminus I_n} \varphi_*\left(\frac{\lambda |h|}{v}\right) v  \to 0.
\] 
Finally, from the fact that $\int_{E_n} v\le \int_{0}^{m(E_n)} v^*\le \int_{0}^{m(E_n)} w$ and from (\ref{eq:2}) we have 
\[
\varphi(t_{n}) \int_{E_n} v \le \varphi(t_{n})\int_{0}^{m(E_n)} w = 1/2^n \to 0 .
\]
Consequently in both cases we have that $x_n'\to x$ and $x_n''\to x$ weakly.  

Let's compute now the diameter of $Z$. For $n\in\mathbb{N}$,
\begin{eqnarray*}
\norm{x_n ' - x_n ''} &=& 2\norm{t_n \chi_{E_n}}= 
 2\inf\left\{\lambda>0 : \int_{0}^{m (E_n)} \varphi\left(\frac{t_n}{\lambda}\right)w \leq 1\right\}\\
&=& 2\inf\left\{\lambda>0 : \frac{t_n}{\lambda} \leq \varphi^{-1}\left(\frac{1}{W(m (E_n))}\right)\right\}\\
&=& \frac{2t_n}{\varphi^{-1}\left({1}/{W(m (E_n))}\right)} \overset{\text{by (\ref{eq:2})}}{=} \frac{2t_n}{\varphi^{-1}(2^n \varphi(t_n))} \overset{\text{by (\ref{ineq:1})}} 
{\geq} \frac{2n}{n+1}.
\end{eqnarray*}
Hence  $\norm{x_n ' -x_n ''} \rightarrow 2$ as $n \rightarrow \infty$. Taking $f_n ' = \frac{x_n '}{\norm{x_n '}}$ and $f_n '' = \frac{x_n ''}{\norm{x_n ''}}$, for any bounded linear functional $F$ on $\Lambda_{\varphi,w}$ we get
\[
|F(x-f'_n)| 
\leq |F(x-x_n ')| + \left| F\left(x_n ' - \frac{x_n'}{\norm{x_n'}} \right) \right| \\
\leq  |F(x-x_n')| +\left|1-\frac{1}{\norm{x_n'}}\right|\,\norm{F}\norm{x_n'} \rightarrow 0,
\]
as $n\to\infty$, due to $\|x_n'\|\to 1$. Thus $f_n' \rightarrow x$ weakly and similarly  $f_n'' \rightarrow x$ weakly. 

 We also show that $\norm{f_n' - f_n ''} \rightarrow 2$. Indeed,
\begin{eqnarray*}
\norm{f_n' - f_n ''}  &=& \norm{\frac{x_n '}{\norm{x_n '}} - x_n' +x_n ' - \frac{x_n ''}{\norm{x_n ''}}}
\geq \norm{\frac{x_n '}{\norm{x_n '}} - x_n'} - \norm{\frac{x_n ''}{\norm{x_n ''}}-x_n'}\\
&=& \left|\frac{1}{\norm{x_n'}}-1\right|\, \norm{x_n'} - \norm{\frac{x_n''}{\norm{x_n''}}+x_n'' - x_n '' - x_n'} \\
&\ge& \left|\frac{1}{\norm{x_n'}}-1\right|\, \norm{x_n'}  -\left|\frac{1}{\norm{x_n''}}-1\right|\norm{x_n''} + \norm{x_n' -x_n''}.
\end{eqnarray*}
The last expression approaches  2 as $\norm{x_n'}, \norm{x_n''} \rightarrow 1$ and $\norm{x_n' - x_n''} \rightarrow 2$ as $n \rightarrow \infty$. We have constructed two sequences $(f_n'), (f_n'') \subset Z $ whose distance goes to $2$. This shows that the diameter of $Z$ is two.
The proof is finished.
\end{proof}

As a result of Corollary \ref{cor:weak} and Theorem \ref{th:func} we obtain a full characterization of the relatively weak subsets of a unit ball with diameter two in  Orlicz-Lorentz function spaces equipped with the Luxemburg norm. It is a generalization of the analogous theorem for Orlicz spaces \cite[Theorem 2.5]{AKM}.

\begin{Theorem}\label{th:func-main}
Let $w$ be a decreasing weight function such that $W(\infty) = \infty$ if $\gamma = \infty$, and let $\varphi$ be an Orlicz $N$-function.  Then the diameter of any nonempty relatively weakly open subset of the unit ball in Orlicz-Lorentz function space $\Lambda_{\varphi, w}$ equipped with the Luxemburg norm is equal to $2$ if and only if $\varphi$ does not satisfy the $\Delta_{2}$ condition when $\gamma = \infty$, and $\varphi$ does not satisfy the $\Delta^{\infty}_{2}$ condition when $\gamma< \infty$.
\end{Theorem}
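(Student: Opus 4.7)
My plan is to observe that Theorem \ref{th:func-main} is an immediate synthesis of the two results already established in this section, Theorem \ref{th:func} and Corollary \ref{cor:weak}; the entire argument reduces to combining them, since all the analytic heavy lifting has been done in proving those results.

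For the sufficiency direction, i.e.\ assuming $\varphi$ fails the appropriate $\Delta_2$ condition, the conclusion that every nonempty relatively weakly open subset of $B_{\Lambda_{\varphi,w}}$ has diameter two is exactly the content of Theorem \ref{th:func}, which I would invoke directly. There is nothing to add: the hypotheses $W(\infty)=\infty$ when $\gamma=\infty$ and the distinction between $\Delta_2$ and $\Delta_2^\infty$ according to whether $\gamma$ is infinite or finite match up verbatim.

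For the necessity direction I would argue by contrapositive. Suppose $\varphi$ does satisfy the $\Delta_2$ condition when $\gamma=\infty$, or the $\Delta_2^\infty$ condition when $\gamma<\infty$. Then Corollary \ref{cor:weak}, which packages Theorem \ref{th:RN} together with the denting-point characterization of the Radon-Nikod\'ym property, produces relatively weakly open subsets of $B_{\Lambda_{\varphi,w}}$ of arbitrarily small diameter. In particular there exists at least one such nonempty subset of diameter strictly less than two, so the diameter-two property fails.

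There is no genuine obstacle in this final step: the hard analytic work, namely the construction in Theorem \ref{th:func} of sequences $(x_n')$ and $(x_n'')$ that converge weakly to $x$ while remaining at norm-distance approaching two, and the duality/separability argument of Theorem \ref{th:RN} invoking \cite{BS, K, KLR, Z}, is already in place. The only thing to check is that the hypotheses on $w$ and $\varphi$ in the two cited results agree with those of Theorem \ref{th:func-main}, which is immediate from their statements. Thus the proof is essentially a two-line deduction.
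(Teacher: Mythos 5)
Your proposal is correct and matches the paper exactly: the paper derives Theorem \ref{th:func-main} precisely by combining Theorem \ref{th:func} for the sufficiency direction with Corollary \ref{cor:weak} for the necessity direction (via the contrapositive), with no further argument given.
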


As a corollary of Theorems \ref{th:RN} and  \ref{th:func-main} we obtain a characterization of the Radon-Nikod\'ym property.

\begin{Corollary}
Let $\varphi$ be an Orlicz $N$-function and $w$ a decreasing weight function on $I=[0,\gamma)$ such that $W(\infty) = \infty$ if $\gamma = \infty$. Then the Orlicz-Lorentz space $\Lambda_{\varphi,w}$ has the Radon-Nikod\'ym property if and only if $\varphi$ satisfies the $\Delta_2$ condition if $\gamma=\infty$, and $\varphi$ satisfies the $\Delta_2^\infty$ condition if $\gamma < \infty$.

\end{Corollary}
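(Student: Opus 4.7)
The plan is to assemble the corollary from Theorems \ref{th:RN} and \ref{th:func-main} together with the standard link between the Radon--Nikod\'ym property and the existence of slices of arbitrarily small diameter.

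For the sufficiency direction, I would simply invoke Theorem \ref{th:RN}: if $\varphi$ satisfies the $\Delta_2$ condition (when $\gamma=\infty$) or the $\Delta_2^{\infty}$ condition (when $\gamma<\infty$), then $\Lambda_{\varphi,w}$ is a separable dual Banach space, and hence by the classical characterization (see \cite{DU, L}) has the Radon--Nikod\'ym property.

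For the necessity direction, I would argue by contrapositive. Suppose $\varphi$ fails the appropriate $\Delta_2$ condition. Then by Theorem \ref{th:func-main}, every nonempty relatively weakly open subset of the unit ball $B_{\Lambda_{\varphi,w}}$ has diameter equal to $2$. Since every slice of $B_{\Lambda_{\varphi,w}}$ is a nonempty relatively weakly open subset of $B_{\Lambda_{\varphi,w}}$, it follows that every slice of the unit ball has diameter two. But any Banach space with the Radon--Nikod\'ym property admits slices of arbitrarily small diameter (see \cite{B, DU}). Therefore $\Lambda_{\varphi,w}$ cannot have the Radon--Nikod\'ym property, finishing the proof.

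The argument is essentially a packaging of the two main theorems, so I do not anticipate genuine obstacles; the only point requiring care is to articulate clearly why the diameter two property of $\Lambda_{\varphi,w}$ forces every slice of the unit ball to have diameter two, which rules out RNP via the slice characterization. All of these facts are available in the excerpt or in the cited references, so the proof will be only a few lines long.
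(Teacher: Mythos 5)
Your proposal is correct and matches the paper's intended argument exactly: the paper derives this corollary precisely by combining Theorem \ref{th:RN} (the $\Delta_2$ condition gives a separable dual space, hence RNP) with Theorem \ref{th:func-main} (failure of $\Delta_2$ gives the diameter two property, which kills RNP via the slice characterization from \cite{B, DU}). The one point you flag for care --- that slices are relatively weakly open subsets of the ball, so the diameter two property forces all slices to have diameter two --- is exactly the observation the paper relies on in Corollary \ref{cor:weak} and its surrounding remarks.
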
 

\section{Sequence spaces}

In this section 
 we consider the Orlicz-Lorentz sequence space $\lambda_{\varphi, w}$, where $\varphi$ is an Orlicz function and $w= (w(k))_{k=1}^\infty$ is a positive decreasing sequence. Let $W(n) =\sum_{k=1}^n w(k)$, $n\in \mathbb{N}$, and $W(\infty) = \sum_{k=1}^\infty w(k)$. As in function spaces, the modular $\alpha$ for $x=(x(k))_{k=1}^\infty$ is defined by 
\[
 \alpha(x) = \alpha_{\varphi, w}(x) = \sum_{k=1}^{\infty} \varphi(x^*(k))w(k),
 \]
 and an Orlicz-Lorentz sequence space by
 \[
   \lambda_{\varphi, w}= \{x  : \alpha(\lambda x) < \infty,\,\, \text{for some} \,\, \lambda >0 \}.
 \]
 The space $\lambda_{\varphi,w}$ satisfies the Fatou property and the modular $\alpha$ is orthogonally subadditive \cite{KR1}. The Luxemburg norm for $\lambda_{\varphi ,w}$ is given as $\|x\| = \|x\|_{\varphi,w} = \inf\{\epsilon > 0: \alpha(x/\epsilon) \le 1\}$. It is well known that  $(\lambda_{\varphi,w})_a = (\lambda_{\varphi,w})_b =  
 \{x: \rho_{\varphi,w}(\delta x) < \infty \ \ \text{ for all} \ \ \delta >0\}$."   
 
 For a sequence $x\in l^0$, the modular $p_{\varphi,w}$ is defined by 
 \[
 p_{\varphi, w}(x) = \inf \left\{\sum_{k=1}^{\infty} \varphi\left(\frac{|x(k)|}{v(k)}\right) v(k) : v \prec w \right\},
 \]
where $v\prec w$ means that $\sum_{k=1}^n v^*(k) \le \sum_{k=1}^n w(k)$, $n\in\mathbb{N}$. Let
\[
 \mathfrak{m}_{\varphi_, w}= \{x  : p_{\varphi,w}(\delta x) < \infty, \,\, \text{for some} \,\, \delta >0\}.
 \]
 The space $ \mathfrak{m}_{\varphi_, w}$ equipped with the norm $\|x\|_{ \mathfrak{m}}^0 =\|x\|_{ \mathfrak{m}_{\varphi_, w}}^0 = \inf_{k>0}\frac{1}{k}(p_{\varphi,w} (kx) + 1)$  will be denoted by $ \mathfrak{m}^0_{\varphi_, w}$. It  is a r.i. Banach space with $p_{\varphi,w}(x) = p_{\varphi,w}(x^*)$ \cite{KR}. 
In view of Theorem 5.2 in \cite{KLR} we get a sequence  analogue of Theorem \ref{th:KLR}.

\begin{Theorem}  \label{th:KLR-seq}
\label{th:02}
Let  $w$ be a decreasing weight sequence and $\varphi$ be an Orlicz $N$-function.  Then the K\"othe dual space to Orlicz-Lorentz space $\lambda _{\varphi ,w}$ is expressed as
\begin{equation*}
\left(\lambda _{\varphi ,w}\right) ^{\prime }=\mathfrak{m}_{\varphi _{\ast},w}^{0}
\end{equation*}
with equality of norms.    Any functional $F\in (\lambda_{\varphi,w})^*$ is uniquely represented as $F=H+S$, where $H$ is a regular functional such that for some $h = (h(k))_{k=1}^\infty\in \mathfrak{m}_{\varphi_*,w}^0$ we have
\[
H(x) = \sum_{k=1}^\infty x(k) h(k), \ \ \ \ x\in \lambda_{\varphi,w},
\]
with $\|H\| = \|h\|^0_{\mathfrak{m}_{\varphi_*,w}}$, and $S$ is a singular functional such that
\[
S(x) = 0 \ \ \ \text{for all} \ \ \ \ x\in (\lambda_{\varphi,w})_a.
\]
\end{Theorem}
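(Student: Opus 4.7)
The plan is to mirror exactly the proof scheme used for Theorem \ref{th:KLR}: split the statement into (i) the Köthe dual identification $(\lambda_{\varphi,w})' = \mathfrak{m}_{\varphi_*,w}^0$ with equality of norms, and (ii) the decomposition $F = H + S$ of an arbitrary $F \in (\lambda_{\varphi,w})^*$ into a regular and singular part. Part (i) is not proved from scratch here; it is the sequence-space analogue of \cite[Theorem 2.2]{KLR} and is the content of \cite[Theorem 5.2]{KLR}. Therefore, I would simply invoke that result, noting that $\varphi_*$ is again an Orlicz $N$-function because $\varphi$ is, and that the weight sequence $w$ is decreasing.

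For part (ii) the strategy is to apply the general abstract decomposition of the dual of a Banach function lattice recalled in the introduction, namely $X^* = (X_a)^* \oplus X_a^\perp$, isometrically isomorphic to $X' \oplus X_s^*$, whenever $X$ has the Fatou property and $X_a = X_b$. So the main preparation is to check these two hypotheses for $X = \lambda_{\varphi,w}$. The Fatou property of $\lambda_{\varphi,w}$ has been recorded in the opening of Section 3 (and follows from \cite{KR1}). The equality $(\lambda_{\varphi,w})_a = (\lambda_{\varphi,w})_b$ is likewise recalled there as a known fact, with both spaces described as $\{x : \alpha_{\varphi,w}(\delta x) < \infty \text{ for all } \delta > 0\}$. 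With these two ingredients available, \cite[Corollary 4.2, p.~23]{BS} yields $((\lambda_{\varphi,w})_a)^* = (\lambda_{\varphi,w})'$ isometrically.

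Given these isometric identifications, uniqueness and existence of the decomposition $F = H + S$ is now automatic. For the regular part $H$, its identification with an integral pairing $H(x) = \sum_{k=1}^\infty x(k) h(k)$ for a unique $h \in (\lambda_{\varphi,w})' = \mathfrak{m}_{\varphi_*,w}^0$ comes from the definition of the Köthe dual in the sequence setting, and the norm equality $\|H\| = \|h\|^0_{\mathfrak{m}_{\varphi_*,w}}$ is precisely the norm equality in part (i) transported through the isometric isomorphism $((\lambda_{\varphi,w})_a)^* \cong (\lambda_{\varphi,w})'$. The singular part $S$ vanishes on $(\lambda_{\varphi,w})_a$ by the very definition of $X_s^* = X_a^\perp$.

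There is no genuine obstacle here: the proof is a bookkeeping combination of \cite[Theorem 5.2]{KLR} with the standard Banach-lattice dual decomposition from \cite{BS, Z}. The only small point that needs explicit verification is that the hypotheses $X_a = X_b$ and the Fatou property apply to $\lambda_{\varphi,w}$, and both are already recorded in the paragraph preceding the theorem, so the proof reduces essentially to citing \cite[Theorem 5.2]{KLR} together with \cite[Corollary 4.2]{BS}.
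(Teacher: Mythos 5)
Your proposal is correct and follows essentially the same route as the paper, which presents this theorem as an immediate consequence of \cite[Theorem 5.2]{KLR} combined with the standard dual decomposition $X^* = (X_a)^* \oplus X_a^\perp$ for a Banach function lattice with the Fatou property and $X_a = X_b$, both of which are recorded for $\lambda_{\varphi,w}$ at the start of Section 3. Nothing further is needed.
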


\begin{Theorem}\label{th:RN-seq}
Let $\varphi$ be an Orlicz $N$-function and let $w$ be a weight sequence such that  $W(\infty) = \infty$. If $\varphi$ satisfies the $\Delta^0_2$ condition then $\lambda_{\varphi,w}$ is a separable dual space. Consequently $\lambda_{\varphi,w}$ has the Radon-Nikod\'ym property, and there exist relatively weakly open subsets of the unit ball $B_{\lambda_{\varphi,w}}$ with arbitrarily small diameter.
\end{Theorem}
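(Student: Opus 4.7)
The plan is to follow closely the strategy of Theorem \ref{th:RN}: exhibit $\lambda_{\varphi,w}$ as the dual of a separable Banach space, so that it automatically has the Radon--Nikod\'ym property. Applying Theorem \ref{th:KLR-seq} with $\varphi$ replaced by $\varphi_{*}$ (which is again an Orlicz $N$-function since $\varphi_{**}=\varphi$), we identify $(\lambda_{\varphi_{*},w})'=\mathfrak{m}^{0}_{\varphi,w}$, so $\mathfrak{m}^{0}_{\varphi_{*},w}$ is itself a K\"othe dual and hence possesses the Fatou property. The desired duality $\lambda_{\varphi,w}=[(\mathfrak{m}^{0}_{\varphi_{*},w})_{a}]^{*}$ will then follow from the general identity $(X_{a})^{*}=X'$ for Banach lattices $X$ with the Fatou property satisfying $X_{a}=X_{b}$, together with the Fatou property of $\lambda_{\varphi,w}$, which yields $(\mathfrak{m}^{0}_{\varphi_{*},w})'=(\lambda_{\varphi,w})''=\lambda_{\varphi,w}$.

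The step that most diverges from the function-space proof is verifying $(\mathfrak{m}^{0}_{\varphi_{*},w})_{a}=(\mathfrak{m}^{0}_{\varphi_{*},w})_{b}$, since the Bennett--Sharpley theorem invoked via Proposition \ref{prop:fund} requires a non-atomic measure. In the sequence setting I would argue directly: in any Banach sequence lattice each coordinate vector $e_{k}$ is order continuous, because $0\le y_{n}\le e_{k}$ with $y_{n}\downarrow 0$ forces $y_{n}$ to be supported at $\{k\}$ with $y_{n}(k)\downarrow 0$, hence $\|y_{n}\|=y_{n}(k)\,\|e_{k}\|\to 0$. Consequently the closed linear span of the $e_{k}$'s lies inside the closed subspace $(\mathfrak{m}^{0}_{\varphi_{*},w})_{a}$, giving $(\mathfrak{m}^{0}_{\varphi_{*},w})_{b}\subset(\mathfrak{m}^{0}_{\varphi_{*},w})_{a}$; combined with the always-valid inclusion $X_{a}\subset X_{b}$ this yields equality.

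For separability, assuming the $\Delta_{2}^{0}$ condition and $W(\infty)=\infty$, I claim $\lambda_{\varphi,w}=(\lambda_{\varphi,w})_{a}$. Any $x\in\lambda_{\varphi,w}$ must satisfy $x^{*}(k)\to 0$: otherwise, by monotonicity of $x^{*}$ we would have $x^{*}(k)\ge c>0$ for all $k$, and then $\alpha(\delta x)\ge\varphi(\delta c)W(\infty)=\infty$ for every $\delta>0$, contradicting $x\in\lambda_{\varphi,w}$. Given then some $\delta>0$ with $\alpha(\delta x)<\infty$ and any $\lambda>0$, Lemma \ref{le:C} provides $l>1$, $K>1$ and $u_{0}>0$ with $\varphi(lu)\le K\varphi(u)$ for $0\le u\le u_{0}$; choosing $m\in\mathbb{N}$ with $l^{m}\ge\lambda/\delta$ and iterating on the tail where $\delta x^{*}(k)\le u_{0}/l^{m-1}$, we obtain $\varphi(\lambda x^{*}(k))\le K^{m}\varphi(\delta x^{*}(k))$ eventually, so $\alpha(\lambda x)<\infty$. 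Hence $\lambda_{\varphi,w}=(\lambda_{\varphi,w})_{a}=(\lambda_{\varphi,w})_{b}$, which is the norm-closure of the countable $\mathbb{Q}$-linear span of $(e_{k})_{k=1}^{\infty}$, so $\lambda_{\varphi,w}$ is separable. Being a separable dual it has the Radon--Nikod\'ym property \cite{DU,L}, and RNP delivers denting points of $B_{\lambda_{\varphi,w}}$ and therefore slices --- in particular relatively weakly open subsets --- of arbitrarily small diameter \cite{B,DU}.
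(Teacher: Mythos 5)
Your proof is correct and follows essentially the same route as the paper: both identify $\lambda_{\varphi,w}$ isometrically with $[(\mathfrak{m}^0_{\varphi_*,w})_a]^*$ via the K\"othe duality of Theorem \ref{th:KLR-seq}, the Fatou property, and the identity $(X_a)^*=X'$, and both derive separability from the $\Delta_2^0$ condition together with $W(\infty)=\infty$. The only differences are that where the paper cites Proposition 1 of \cite{KR1} for $(\lambda_{\varphi,w})_a=\lambda_{\varphi,w}$ and verifies $(\mathfrak{m}^0_{\varphi,w})_a=(\mathfrak{m}^0_{\varphi,w})_b$ by a tail-truncation argument, you supply direct and correct elementary substitutes; note only the harmless slip that the Fatou property of $\mathfrak{m}^0_{\varphi_*,w}$ should be read off from $(\lambda_{\varphi,w})'=\mathfrak{m}^0_{\varphi_*,w}$ rather than from the identity $(\lambda_{\varphi_*,w})'=\mathfrak{m}^0_{\varphi,w}$ that you quote.
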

 
\begin{proof} By Proposition 1 in \cite{KR1}, under the assumption of the $\Delta_2^0$ condition of $\varphi$ and $W(\infty) = \infty$,  $(\lambda_{\varphi,w})_a = \lambda_{\varphi,w}$ and the unit vectors $e_n$ form a  boundedly complete basis in  $\lambda_{\varphi,w}$. It follows that $(\lambda_{\varphi,w})_b = \lambda_{\varphi,w}$ and the space has the Fatou property. Then clearly the space is separable. 

We also have by Theorem 5.4 in \cite{BS} that $(\lambda_{\varphi,w})^* = (\lambda_{\varphi,w})'$.  Then  in view of Theorem \ref{th:KLR-seq}, 
$\mathfrak{m}^0_{\varphi,w} = (\lambda_{\varphi_*,w})'$ and thus the space $\mathfrak{m}^0_{\varphi,w}$ has the Fatou property.

Let $x = (x(i))\in (\mathfrak{m}^0_{\varphi,w})_b$. Then $\|\sum_{i=1}^m x(i) e_i - x\|^0_\mathfrak{m} = \| x\chi_{\{m+1, m+2,\dots\}}\|^0_\mathfrak{m}\to 0$ as $m\to \infty$. Hence $x\in (\mathfrak{m}^0_{\varphi,w})_a$  \cite[Proposition 3.2, p. 14]{BS}.  Thus $(\mathfrak{m}^0_{\varphi,w})_a = (\mathfrak{m}^0_{\varphi,w})_b$. Now similarly as in the function case in view of \cite[Corollary 4.2, p. 23]{BS}, $[(\mathfrak{m}^0_{\varphi,w})_a ]^* = (\mathfrak{m}^0_{\varphi,w})'$.
Finally by Theorem \ref{th:KLR-seq}, 
$[(\mathfrak{m}^0_{\varphi_*,w})_a ]^* =(\mathfrak{m}^0_{\varphi_*,w})' = \lambda_{\varphi,w}$, which shows that $\lambda_{\varphi,w}$ is a dual space.

The conclusion of the proof follows like in Theorem \ref{th:RN} and Corollary \ref{cor:weak}.

\end{proof}

Now, we prove the analogous result  to Theorem \ref{th:func} for the sequence spaces. 

\begin{Theorem}\label{th:seq}
Let $w$ be a weight sequence such that $W(\infty)= \infty$. Suppose $\varphi$ is an Orlicz $N$-function and it does not satisfy the $\Delta_2^0$ condition. Then any nonempty relatively weakly open subset of the unit ball in the Orlicz-Lorentz sequence space $\lambda_{\varphi, w}$ equipped with the Luxemburg norm has the diameter two.  
\end{Theorem}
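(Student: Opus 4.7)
\emph{Plan of proof.} The strategy is to adapt the $t_n\downarrow 0$ portion of the proof of Theorem~\ref{th:func} to the sequence setting, which is the only relevant case since the failure of $\Delta_2^0$ produces small arguments. Fix $x\in Z$ with $\|x\|=1$. Because $W(\infty)=\infty$ and $\alpha(\lambda x)<\infty$ for some $\lambda>0$, the decreasing rearrangement must satisfy $x^*(k)\to 0$, and hence $x(k)\to 0$; therefore for any $c>0$ the set $E=\{k:|x(k)|\le c\}$ is cofinite in $\mathbb{N}$.

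By Lemma~\ref{le:C} and the failure of $\Delta_2^0$, choose $t_n\downarrow 0$ with $\varphi((1+1/n)t_n)>2^n\varphi(t_n)$. Using $W(\infty)=\infty$, let $m_n$ be the least integer with $W(m_n)\ge 1/\varphi((1+1/n)t_n)$; then
\[
\tfrac{1}{\varphi((1+1/n)t_n)}\le W(m_n)\le \tfrac{1}{\varphi((1+1/n)t_n)}+w(1).
\]
Pick pairwise disjoint $E_n\subset E$ with $|E_n|=m_n$, arranged so that $\min E_n\to\infty$ (possible since $E$ is cofinite). Set
\[
x_n'=x\chi_{\mathbb{N}\setminus E_n}+t_n\chi_{E_n},\qquad x_n''=x\chi_{\mathbb{N}\setminus E_n}-t_n\chi_{E_n}.
\]

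The remaining estimates parallel Theorem~\ref{th:func}. First, $\liminf\|x_n'\|\ge 1$ by the Fatou property; orthogonal subadditivity of $\alpha$ together with $(t_n\chi_{E_n})^*=t_n\chi_{\{1,\ldots,m_n\}}$ yields $\alpha(x_n')\le\alpha(x)+\varphi(t_n)W(m_n)\le 1+\varphi(t_n)/\varphi((1+1/n)t_n)+\varphi(t_n)w(1)\le 1+1/2^n+\varphi(t_n)w(1)\to 1$, so $\|x_n'\|\to 1$ (and similarly $\|x_n''\|\to 1$). To show $x_n'\to x$ weakly, decompose any $F\in(\lambda_{\varphi,w})^*$ as $F=H+S$ via Theorem~\ref{th:KLR-seq}. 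Since $|x-x_n'|\le(c+t_1)\chi_{E_n}$, one has $\alpha(\delta(x-x_n'))\le\varphi(\delta(c+t_1))W(m_n)<\infty$ for every $\delta>0$, so $x-x_n'\in(\lambda_{\varphi,w})_a$ and $S(x-x_n')=0$. For the regular part, pick $v\prec w$ with $\sum\varphi_*(\lambda|h|/v)v<\infty$ and apply Young's inequality to $\sum_{E_n}|xh|$ and $t_n\sum_{E_n}|h|$; the three resulting contributions all tend to zero: $\sum_{E_n}\varphi(|x|)v\to 0$ because the sequence Hardy--Littlewood inequality and Hardy's Lemma (with $v\prec w$) give $\sum\varphi(|x|)v\le\sum\varphi(x^*)w=\alpha(x)<\infty$, of which $E_n$ is a tail; $\sum_{E_n}\varphi_*(\lambda|h|/v)v\to 0$ as the tail of a convergent series; and $\varphi(t_n)\sum_{E_n}v\le\varphi(t_n)\sum_{k=1}^{m_n}v^*(k)\le\varphi(t_n)W(m_n)\to 0$.

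Finally, $\|x_n'-x_n''\|=2\|t_n\chi_{E_n}\|=2t_n/\varphi^{-1}(1/W(m_n))\ge 2n/(n+1)$ by the lower bound on $W(m_n)$. Normalizing $f_n'=x_n'/\|x_n'\|$ and $f_n''=x_n''/\|x_n''\|$ yields two sequences in $B_{\lambda_{\varphi,w}}$ still converging weakly to $x$ with $\|f_n'-f_n''\|\to 2$; since $Z$ is weakly open, $f_n',f_n''\in Z$ eventually, so $\diam(Z)=2$. The only point of real care, relative to the function-case proof, is the integer rounding in the choice of $m_n$: one cannot achieve $W(m_n)\varphi(t_n)=2^{-n}$ exactly, but the extra slack of $\varphi(t_n)w(1)$ is harmless because $\varphi(t_n)\to 0$.
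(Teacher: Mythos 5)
Your proposal is correct and follows the same overall architecture as the paper's proof: perturb $x$ by $\pm t_{n}\chi_{E_n}$ on small disjoint sets, use orthogonal subadditivity of $\alpha$ to get $\|x_n'\|\to 1$, split each $F\in(\lambda_{\varphi,w})^*$ into regular and singular parts via Theorem~\ref{th:KLR-seq} and kill the regular part with Young's inequality against some $v\prec w$, and finally compute $\|x_n'-x_n''\|=2t_n/\varphi^{-1}(1/W(m(E_n)))$. The one place where you genuinely diverge is the step you correctly identify as the crux of the sequence case, namely the integer rounding in the choice of $m_n$. The paper handles this by passing to a subsequence $(n_j)$ and building, by induction, a two-sided sandwich $2^{-j}\le\varphi(t_{n_j})W(m(E_j))\le 2^{-(j-2)}$ (normalizing $w(1)\le 1$ so that the overshoot $w(m_j)$ stays controlled), and then needs the chain $\varphi^{-1}(1/W(m(E_j)))\le\varphi^{-1}(2^{j}\varphi(t_{n_j}))\le\varphi^{-1}(2^{n_j}\varphi(t_{n_j}))\le(1+1/n_j)t_{n_j}$ for the diameter bound. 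You instead take $m_n$ minimal with $W(m_n)\ge 1/\varphi((1+1/n)t_n)$, so the lower bound gives $\varphi^{-1}(1/W(m_n))\le(1+1/n)t_n$ in one line, while the overshoot $W(m_n)\le 1/\varphi((1+1/n)t_n)+w(1)$ contributes only the harmless extra term $\varphi(t_n)w(1)\to 0$ to the modular estimate $\alpha(x_n')\le\alpha(x)+\varphi(t_n)W(m_n)$. This is simpler: no subsequence, no normalization of $w(1)$, and no two-sided estimate is needed. All the remaining details check out (in particular, $(\lambda_{\varphi,w})_a$ contains every finitely supported sequence, so your verification that $x-x_n'$ is order continuous is even more than is needed, and your observation that $E$ is cofinite because $W(\infty)=\infty$ forces $x^*(k)\to 0$ is correct).
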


\begin{proof}
Let Z be a weakly open subset of the unit ball in $\lambda_{\varphi, w}$, and let $x \in Z$ such that $\|x\| = 1$. Suppose $\varphi$ does not satisfy  $\Delta_2^0$. Then there exists $(t_n) \subset (0, \infty)$ such that $t_n \rightarrow 0$ and for $n \in \mathbb{N}$,

\begin{equation} \label{eqn:4}
\varphi\left( \left(1+ \frac{1}{n}\right) t_n\right) > 2^n \varphi(t_n).
\end{equation}

We claim that there exists a sequence of subsets $(E_j)\subset \mathbb{N} \setminus \{1,\dots,j\}$ and a subsequence $(n_j) \subset\mathbb{N}$ such that for all $j\in \mathbb{N}$,

\begin{equation} \label{eqn:5}
\frac{1}{2^j} \leq \varphi(t_{n_j}) \sum_{k=1}^{m(E_j)}w(k) \leq \frac{1}{2^{j-2}}.
\end{equation}
  
Indeed, without loss of generality, assume $w(1) \leq 1$. Then, $w(k) \leq 1$ for any $k \in \mathbb{N}$. Since $\varphi(t_n) \rightarrow 0$, there exists the largest natural number $n_1$ such that $\varphi(t_{n_1}) \leq 1$. Then  there exists $k_1 \geq 1$ such that $\frac{1}{2^{k_1}} \leq \varphi(t_{n_1}) \leq \frac{1}{2^{k_1 - 1}}$. Hence for all $j \in \mathbb{N}$, 
\begin{equation} \label{eqn:6}
\frac{W(j)}{2^{k_1}} \leq \varphi(t_{n_1}) W(j) \leq \frac{W(j)}{2^{k_1 -1}}.
\end{equation}
In view of the assumption $W(\infty) = \infty$  we can find $j \in \mathbb{N}$ such that 
\[
W(j) > 2^{k_1-1},
\]
and let 
\[
m_1 = \min \{j \in \mathbb{N} : W(j) > 2^{k_1-1}\}.
\]
By   $w(1)\le 1$ and $k_1 \ge 1$, we have that $m_1 \ge 2$. By definition of $m_1$ we get that $W(m_1 -1) \leq 2^{k_1 - 1}$. But $W(m_1) = W(m_1 -1) + w(m_1) \leq 2^{k_1 - 1} + 1 \leq 2^{k_1}$.  Now by (\ref{eqn:6}),
\begin{equation*} 
\frac{1}{2} = \frac{2^{k_1-1}}{2^{k_1}}\le \frac{W(m_1)}{2^{k_1}} \leq \varphi(t_{n_1})W(m_1) \leq \frac{W(m_1)}{2^{k_1 -1}} \le \frac{2^{k_1}}{2^{k_1 - 1}} = 2.
\end{equation*}
 Finally let $E_1\subset\mathbb{N}\setminus \{1\}$ be such that $m(E_1) = m_1$, and so  we get (\ref{eqn:5}) for $j=1$. 
 
 As a second step choose $n_2 > n_1$ and $k_2 > k_1$ such that $\frac{1}{2^{k_2}} \leq \varphi(t_{n_2}) \leq \frac{1}{2^{k_2 - 1}}$. 
Let 
\[
m_2 = \min \{j\in \mathbb{N}: W(j) > 2^{k_2 - 2}\}.
\]
 Then $2^{k_2 - 2} \le W(m_2)  = W(m_2 - 1) + w(m_2) \leq 2^{k_2 -2} + 1 \leq 2^{k_2 - 1}$. Hence
\begin{equation*}
\frac{1}{2^2} = \frac{2^{k_2-2}}{2^{k_2}}\le \frac{W(m_2)}{2^{k_2}} \leq \varphi(t_{n_2}) W(m_2) \leq \frac{W(m_2)}{2^{k_2 -1}} \le \frac{2^{k_2-1}}{2^{k_2 - 1}} = 1
\end{equation*} 
Thus, there exists $E_2\subset \mathbb{N} \setminus \{1,2\}$  of size $m_2=m(E_2)$  satisfying (\ref{eqn:5}) for $j=2$.  Now proceeding analogously by induction we can find $E_j \subset \mathbb{N} \setminus \{1,\dots,j\}$ and a subsequence $(n_j)$ satisfying (\ref{eqn:5}).

Define now the sequences $(x_j')_{j=1}^{\infty}, (x_j'')_{j=1}^{\infty}$ by $x_j' = x \chi_{\mathbb{N}\setminus E_j} + t_{n_j} \chi_{E_j}$ and $x_j'' = x \chi_{\mathbb{N}\setminus E_j} - t_{n_j} \chi_{E_j}$. By orthogonal subadditivity of $\alpha$ and by (\ref{eqn:5}),

\begin{eqnarray*}
\alpha(x_j') &=& \sum_{k=1}^{\infty} \varphi((x \chi_{\mathbb{N}\setminus E_j} + t_{n_j} \chi_{E_j})^*(k)) w(k)
 \leq  \sum_{k=1}^{\infty} \varphi((x \chi_{\mathbb{N}\setminus E_j})^*(k)) w(k) +\sum_{k=1}^{\infty} \varphi((t_{n_j} \chi_{E_j})^* (k) w(k))\\
&\leq&  \alpha(x) +\sum_{k=1}^{\infty} \varphi(t_{n_j}) \chi_{\{1,\dots,m(E_j)\}} (k) w(k)
\leq  \alpha(x) + \varphi(t_{n_j}) \sum_{k=1}^{m(E_j)} w(k)
\leq \alpha(x) + \frac{1}{2^{j-2}} \leq 1 + \frac{1}{2^{j-2}}.
\end{eqnarray*} 

\noindent Dividing each side of the above inequality by $1 + \frac{1}{2^{j-2}}$ we get by  convexity of the modular $\alpha$, 
\[
\alpha \left((1+{1}/{2^{j-2}})^{-1} x_j' \right) \leq \left(1+{1}/{2^{j-2}}\right)^{-1} \alpha (x_j') \leq 1,
\]
 which implies that $\|x_j'\| \leq 1 + {1}/{2^{j-2}}$.  Also, $\|x_j'\| \geq \|x \chi_{\mathbb{N} \setminus E_j}\| \geq \|x \chi_{\{1,\dots, j\}}\|$. By the Fatou property of $\lambda_{\varphi,w}$ we have $\| x \chi_{\{1,\dots, j \}} \| \rightarrow \|x\|=1$,  and so $\|x_j'\| \rightarrow 1$ as $j \rightarrow \infty$. Similarly $\|x_j''\| \rightarrow 1$.\\

We claim that $x_j' \rightarrow x$ and $x_j'' \rightarrow x$ weakly. Since $m(E_j) < \infty$, $x-x_j' = x \chi_{E_j} -t_{n_j} \chi_{E_j} \in (\lambda_{\varphi,w})_a$. Then by Theorem \ref{th:KLR-seq}, $F(x -x_j') = H(x-x_j')$ for any $F\in (\lambda_{\varphi,w})^*$. 
Let $H$ be generated by $(\eta (k))_{k=1}^{\infty} = \eta \in \mathfrak{m}_{\varphi_*,w}$. Thus  we can write $H(x) = \sum_{k=1}^{\infty} x(k)\eta(k)$ for $x \in \lambda_{\varphi, w}$.  Since $\eta \in \mathfrak{m}_{\varphi_*,w}$, $p_{\varphi_*,w}(\delta \eta) < \infty$ for some $\delta>0$. Let $v $ be a positive sequence such that $v \prec w$ and 
\begin{equation}\label{eqn:7}
\sum_{k=1}^\infty \varphi_*\left(\frac{\delta |\eta(k)|}{v(k)}\right) v(k) \le p_{\varphi_*,w}(\delta \eta) + 1 < \infty.
\end{equation}
Then by Young's inequality
\begin{eqnarray*}
|H(x-x_j')| &=&\left|\sum_{k=1}^{\infty} (x(k) \chi_{E_j}(k) -t_{n_j} \chi_{E_j}(k))\eta(k)\right|\\
&\leq& \sum_{k=1}^{\infty}|x(k) \chi_{E_j}(k)\eta(k)| + \sum_{k=1}^{\infty}|t_{n_j} \chi_{E_j} \eta(k)| \\
&=& \delta^{-1}\left(\sum_{k=1}^{\infty} \frac{|x(k)\delta\eta(k)| v(k)}{v(k)}\chi_{E_j}(k) + \sum_{k=1}^{\infty} \frac{t_{n_j} \delta |\eta(k)|v(k)}{v(k)} \chi_{E_j}(k)\right)\\
&\leq& \delta^{-1}\left(\sum_{k=1}^{\infty} \varphi(|x(k)|)v(k) \chi_{E_j}(k)+ 2\sum_{k=1}^{\infty} \varphi_*\left(\frac{\delta|\eta(k)|}{v(k)}\right) v(k) \chi_{E_j}(k) + \varphi(t_{n_j}) \sum_{k=1}^{\infty} v(k) \chi_{E_j} \right).
\end{eqnarray*}
By $v\prec w$ and in view of Lemma \ref{lem:hardy}, $\sum_{k=1}^\infty \varphi(|x(k)|) v(k) \le \sum_{k=1}^\infty \varphi(x^*(k)) v^*(k) \le \sum_{k=1}^\infty \varphi(x^*(k)) w(k) =\alpha(x) < \infty$. Hence 
\[
\sum_{k=1}^\infty \varphi(|x(k)|) v(k)\chi_{E_j}(k) \le  \sum_{k=j+1}^\infty \varphi(|x(k)|)v(k)\to 0 \ \ \ \text {as}\ \ \ j\to \infty.
\]
In view of $\sum_{k=1}^\infty v(k) \chi_{E_j}(k)\le \sum_{k=1}^{m(E_j)} v^*(k)\le \sum_{k=1}^{m(E_j)} w(k)$ we get by (\ref{eqn:5}), 
\[
\varphi(t_{n_j}) \sum_{k=1}^{\infty} v(k) \chi_{E_j} (k)\le \varphi(t_{n_j})\sum_{k=1}^{m(E_j)} w(k) \le 1/2^{j-2} \to 0 .
\]
We also have by (\ref{eqn:7}),
\[
\sum_{k=1}^\infty \varphi_*\left(\frac{\delta |\eta(k)|}{v(k)}\right) v(k) \chi_{E_j}(k)\le 
\sum_{k=j+1}^\infty \varphi_*\left(\frac{\delta |\eta(k)|}{v(k)}\right) v(k)  \to 0.
\] 
 Thus, $|H(x- x_j')| \rightarrow 0$, which implies that $x_j' \rightarrow x$ weakly. Similarly, $x_j'' \rightarrow x$ weakly.\
 
Now, we show that $\|x_j'- x_j''\| \rightarrow 2$. 
From (\ref{eqn:5}), $\varphi^{-1} (2^j \varphi(t_{n_j})) \geq \varphi^{-1} \left(\frac{1}{W(m(E_j))}\right) \geq \varphi^{-1}(2^{j-2} \varphi(t_{n_j}))$. Due to convexity of $\varphi$,
\begin{eqnarray*}
\|x_j'- x_j''\|& =& \|2 t_{n_j} \chi_{E_j}\| = 2\inf \left\{\delta > 0 : \sum_{k=1}^{\infty} \varphi \left({(t_{n_j}\chi_{E_j})^*}/{\delta} \right) w \leq 1 \right\} \\
&=&  2\inf \left\{\delta > 0 : \sum_{k=1}^{m(E_j)} \varphi \left({t_{n_j}}/{\delta}\right) w \leq 1\right\}
=  2\inf \left\{\delta > 0 : {t_{n_j}}/{\delta} \leq \varphi^{-1} \left({1}/{W(m(E_j))}\right) \right\}\\
&=&  2\inf \left\{\delta > 0 : \frac{t_{n_j}}{\varphi^{-1} \left({1}/{W(m(E_j))}\right)} \leq \delta \right\}
=  \frac{2t_{n_j}}{\varphi^{-1} ({1}/{W(m(E_j))})}\geq \frac{2t_{n_j}}{\varphi^{-1} (2^j \varphi(t_{n_j}))}\\ 
&\geq& \frac{2t_{n_j}}{\varphi^{-1}(2^{n_j} \varphi(t_{n_j}))} \overset{\text{by (\ref{eqn:4})}}{\geq} \frac{2n_j}{n_j+1} \rightarrow 2 \,\,\text{as}\,\, j \rightarrow \infty.  
\end{eqnarray*}
Taking $f_j' = \frac{x_j'}{\|x_j'\|}$ and $f_j'' = \frac{x_j''}{\|x_j''\|}$, $f_j', f_j'' \in B_{\lambda_{\varphi, w}}$. Finally we can show analogously as for function case that $f_j' \rightarrow x$, $f_j'' \rightarrow x$ weakly, and $\|f_j' - f_j''\| \rightarrow 2$ as $j \rightarrow \infty$, and this completes the proof.

\end{proof}

 The following complete characterization  of  Orlicz-Lorentz sequence spaces with the diameter two property (for Orlicz spaces see \cite{AKM}) results from Theorems \ref{th:RN-seq} and \ref{th:seq}.  

\begin{Theorem}\label{th:seq-main}
Let $w$ be a decreasing weight sequence such that $W(\infty) = \infty$, and let $\varphi$ be an Orlicz $N$-function.  Then the diameter of any nonempty relatively weakly open subset of the unit ball in the Orlicz-Lorentz sequence space $\lambda_{\varphi, w}$ equipped with the Luxemburg norm is equal to two if and only if $\varphi$ does not satisfy the $\Delta^0_{2}$ condition. 
\end{Theorem}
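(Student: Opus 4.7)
The plan is to observe that Theorem \ref{th:seq-main} is an immediate synthesis of the two theorems already established in this section, so the proof reduces to assembling the two implications and recording the logical structure.

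For the sufficiency direction (``if $\varphi$ does not satisfy $\Delta_2^0$, then every nonempty relatively weakly open subset of $B_{\lambda_{\varphi,w}}$ has diameter two''), I would simply cite Theorem \ref{th:seq}, which produces, for any weakly open $Z$ containing a norm-one point $x$, two sequences $(f_j'), (f_j'') \subset Z$ with $\|f_j'-f_j''\| \to 2$. Since the diameter of $Z$ is trivially bounded above by $\diam(B_{\lambda_{\varphi,w}}) \le 2$, this forces $\diam(Z) = 2$.

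For the necessity direction (``if $\varphi$ satisfies $\Delta_2^0$, then there exists a nonempty relatively weakly open subset of $B_{\lambda_{\varphi,w}}$ with diameter strictly less than two''), I would invoke Theorem \ref{th:RN-seq}: under the $\Delta_2^0$ hypothesis together with $W(\infty)=\infty$, the space $\lambda_{\varphi,w}$ is a separable dual space, hence has the Radon--Nikod\'ym property. Then a standard result in Banach space geometry \cite{B, DU} guarantees that $B_{\lambda_{\varphi,w}}$ admits slices of arbitrarily small diameter, and since slices are in particular nonempty relatively weakly open subsets of the unit ball, this contradicts the property that every such subset has diameter two.

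There is no real obstacle here: both directions have already been handled in the two referenced theorems, and the only task is to point out that the converses match up (the necessity comes from the contrapositive of Theorem \ref{th:RN-seq} and the sufficiency is Theorem \ref{th:seq}). The only minor subtlety worth emphasizing in the write-up is that arbitrarily small diameter of weakly open subsets is incompatible with every weakly open subset having diameter exactly two, which gives the contrapositive cleanly. The resulting proof is essentially one short paragraph citing Theorems \ref{th:RN-seq} and \ref{th:seq}.
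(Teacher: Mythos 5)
Your proposal is correct and matches the paper exactly: the paper states Theorem \ref{th:seq-main} as an immediate consequence of Theorems \ref{th:RN-seq} and \ref{th:seq}, with the necessity direction coming from the Radon--Nikod\'ym property yielding slices of arbitrarily small diameter, just as you argue. No gap.
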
 

We finish with a criterion on Radon-Nikod\'ym property that follows immediately from Theorems \ref{th:RN-seq} and \ref{th:seq-main}.

\begin{Corollary}
Let $w$ be a decreasing weight sequence such that $W(\infty) = \infty$, and let $\varphi$ be an Orlicz $N$-function. Then the Orlicz-Lorentz sequence space $\lambda_{\varphi,w}$ has the Radon-Nikod\'ym property if and only if $\varphi$ satisfies the $\Delta_2^0$ condition. 
\end{Corollary}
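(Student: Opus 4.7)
The plan is to simply combine the two results cited in the statement. The corollary is not really a new theorem; it is a bookkeeping consequence of Theorem \ref{th:RN-seq} (the ``if'' direction) and Theorem \ref{th:seq-main} (the ``only if'' direction, via the standard RNP–slices link).

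For the ``if'' direction, I would assume that $\varphi$ satisfies the $\Delta_2^0$ condition. Since $w$ is decreasing with $W(\infty)=\infty$ and $\varphi$ is an Orlicz $N$-function, Theorem \ref{th:RN-seq} applies directly and says that $\lambda_{\varphi,w}$ is a separable dual space, which is a classical sufficient condition (Dunford–Pettis / Stegall) for the Radon–Nikod\'ym property. So this direction is essentially a one-line citation.

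For the ``only if'' direction, I would argue by contraposition. Suppose $\varphi$ does not satisfy the $\Delta_2^0$ condition. By Theorem \ref{th:seq-main}, every nonempty relatively weakly open subset of $B_{\lambda_{\varphi,w}}$ then has diameter exactly $2$. In particular, every slice of $B_{\lambda_{\varphi,w}}$, being weakly open (as the intersection of the unit ball with a real half-space $\{F>\alpha\}$ for $F\in (\lambda_{\varphi,w})^*$), has diameter $2$. However, if $\lambda_{\varphi,w}$ had the Radon–Nikod\'ym property, then by the characterization of RNP via dentability (see \cite{B,DU}), the unit ball would admit slices of arbitrarily small diameter, a contradiction. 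Hence RNP fails.

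There is no real obstacle; the only thing to be careful about is to invoke ``slices are relatively weakly open'' explicitly so that Theorem \ref{th:seq-main}, which is phrased for relatively weakly open sets, can be applied. Since both directions are immediate consequences of results already stated in the excerpt, the proof will be only a few lines long and should simply point at the two theorems.
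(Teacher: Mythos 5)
Your proposal is correct and matches the paper's approach: the authors state that the corollary ``follows immediately from Theorems \ref{th:RN-seq} and \ref{th:seq-main},'' which is exactly the combination you spell out (the ``if'' direction from the separable-dual/RNP theorem, the ``only if'' by contraposition via the diameter-two result and the standard fact that RNP forces slices of arbitrarily small diameter). Your explicit remark that slices are relatively weakly open is a reasonable detail to include, and nothing more is needed.
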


\end{document}